\theoremstyle{plain}
\newtheorem{thm}{Theorem}[section]
\newtheorem{cor}[thm]{Corollary}
\newtheorem{lem}[thm]{Lemma}
\theoremstyle{definition}
\newtheorem{defn}[thm]{Definition}
\newtheorem{exmp}[thm]{Example}
\newtheorem{rem}[thm]{Remark}
\newcommand\dto{\dashrightarrow}
\newcommand\lto{\longrightarrow}
\newcommand\nto{\stackrel}
\def\NN{\mathbb{N}}
\def\ZZ{\mathbb{Z}}
\def\kk{k}
\def\PP{\mathbb{P}}
\def\RR{\mathbb{R}}
\def\AA{\mathbb{A}}
\def\NNbm{{\bm{N}}}
\def\MMbm{{\bm{M}}}
\newcommand\Sc{\mathscr{S}}
\newcommand\Hc{\mathscr{H}}
\newcommand\Zc{\mathcal{Z}}
\newcommand\Cc{\mathcal{C}}  
\newcommand\Tc{\mathscr{X}}
\newcommand\Pc{\mathscr{P}}
\newcommand\conv{\mathrm{conv}}
\def\ker{\mathrm{ker}}
\newcommand\Proj{\mathrm{Proj}}
\newcommand\proj{\mathrm{Proj}}
\def\deg{\mathrm{deg}}
\newcommand\Sym{\mathrm{Sym}}
\newcommand\ann{\mathrm{ann}}
\newcommand\length{\mathrm{length}}
\def\dim{\mathrm{dim}}
\newcommand\depth{\mathrm{depth}}
\newcommand\mult{\textnormal{mult}}
\newcommand\supp{\textnormal{supp}}
\newcommand\codim{\textnormal{codim}}
\newcommand\Spec{\textnormal{Spec}}
\newcommand\Mproj{\textnormal{Multiproj}}
\newcommand\ext{\textnormal{Ext}}
\newcommand\X{\textbf{X}}
\newcommand\T{\textbf{T}}
\def\f{\textbf{f}}
\def\x{\textbf{x}}
\newcommand\Nc{\mathcal{N}}
\newcommand\Supp{\mathrm{Supp}}
\newcommand\gen[1]{\left\langle#1\right\rangle}
\def\div{\textnormal{div}}
\newcommand\mat{\textnormal{Mat}}
\def\l.{\mathcal{L}_{\bullet}}
\def\ff.{\mathcal{F}_{\bullet}}
\def\a.{\mathcal{A}_\bullet}
\def\b.{\mathcal{B}_\bullet}
\def\k.{\mathcal{K}_{\bullet}}
\def\M.{\mathcal{M}_\bullet}
\def\Z.{\mathcal{Z}_\bullet}
\def\B.{\mathcal{B}_\bullet}
\def\OO{\mathcal O}
\def\LL{\mathcal L}
\def\LL{\mathcal L}
\def\HH{\mathcal H}
\def\pp{\mathfrak{p}}
\def\mm{\mathfrak{m}}
\def\Aa{\mathfrak{A}}
\def\kkk{\kappa}
\def\pp{\mathfrak{p}}
\def\qq{\mathfrak{q}}
\def\mm{\mathfrak{m}}
\def\aaa{\mathfrak{a}}
\def\bb{\mathfrak{b}}
\def\k.{\mathcal{K}_{\bullet}}
\def\kt.{\k.(\textbf{T};A[\textbf{T}])}
\def\ki.{\k.(\textbf{f};A[\textbf{T}])}
\def\P1{\PP^1}
\def\SSup{\mathfrak S}
\def\Region{{\mathfrak R _B}}
\def\C.{C_\bullet}
\def\D{\textnormal{\bf C}\textnormal{l}(\Xc)}
\def\DT{\textnormal{\bf C}\textnormal{l}(\Tc)}
\def\ee{\mathbf e}
\def\G{\textnormal{\bf G}}
\newcommand{\bfgamma}{\bm{\gamma}}
\newcommand{\bfrho}{\bm{\rho}}
\newcommand\cd{\textnormal{cd}}
\newcommand\grade{\textnormal{grade}}
\newcommand\SIR{\textnormal{Sym}_R (I)}
\newcommand\RIR{\textnormal{Rees}_R (I)}
\def\Xc{\mathscr X}
\def\Bc{\mathcal B}
\def\Mc{\mathcal M}
\newcommand\rad{\textnormal{rad}}
\def\F.{F_\bullet}
\newcommand\sg{\mathrm{sg}}
\newcommand\spann[1]{\left(#1\right)}
\newcommand\colimit[1]{\underset{#1}{\underset{\lto}{\text{lim}}} \ }
\newcommand\ot{\leftarrow}
\newcommand\paren[1]{\left(#1\right)}
\begin{document}

\title{The implicit equation of a multigraded hypersurface}

\author{Nicol\'{a}s Botbol}

\address{Departamento de Matem\'atica\\
FCEN, Universidad de Buenos Aires, Argentina \\
\& Institut de Math\'ematiques de Jussieu \\
Universit\'e de P. et M. Curie, Paris VI, France \\
E-mail address: nbotbol@dm.uba.ar
}

\thanks{This work was partially supported by the project ECOS-Sud A06E04, UBACYT X064, CONICET PIP 5617 and ANPCyT PICT
17-20569, Argentina.}
%%%%%%%%%%%%%%%%%%%%%%%%%%%%%%%%%%%%%%%%%%%%%%%%%%%%%%%%%%%%%%%%%%%%%%%%%%%%%%%%
%%%%%%%%    %%%%%%%%    %%%%%%%%    %%%%%%%%    %%%%%%%%    %%%%%%%%    %%%%%%%%
%%%%%%%%%%%%%%%%%%%%%%%%%%%%%%%%%%%%%%%%%%%%%%%%%%%%%%%%%%%%%%%%%%%%%%%%%%%%%%%%

%%%%%%%%%%%%%%%%%%%%%%%%%%%%%%%%%%%%%%%%%%%%%%%%%%%%%%%%%%%%%%%%%%%%%%%%%%%%%%%%
%%%%%%%%    %%%%%%%%    %%%%%%%%    %%%%%%%%    %%%%%%%%    %%%%%%%%    %%%%%%%%
%%%%%%%%%%%%%%%%%%%%%%%%%%%%%%%%%%%%%%%%%%%%%%%%%%%%%%%%%%%%%%%%%%%%%%%%%%%%%%%%

\begin{abstract}
In this article we analyze the implicitization problem of the image of a rational map $\phi: \Tc \dto \PP^n$, with $\Tc$ a toric variety of dimension $n-1$ defined by its Cox ring $R$. Let $I:=(f_0,\hdots,f_n)$ be $n+1$ homogeneous elements of $R$. We blow-up the base locus of $\phi$, $V(I)$, and we approximate the Rees algebra $\RIR$ of this blow-up by the symmetric algebra $\SIR$. We provide under suitable assumptions, resolutions $\Z.$ for $\SIR$ graded by the divisor group of $\Tc$, $\DT$, such that the determinant of a graded strand, $\det((\Z.)_\mu)$, gives a multiple of the implicit equation, for suitable $\mu\in \DT$. Indeed, we compute a region in $\DT$ which depends on the regularity of $\SIR$ where to choose $\mu$. We also give a geometrical interpretation of the possible other factors appearing in $\det((\Z.)_\mu)$. A very detailed description is given when $\Tc$ is a multiprojective space.
\end{abstract}

\maketitle
%%%%%%%%%%%%%%%%%%%%%%%%%%%%%%%%%%%%%%%%%%%%%%%%%%%%%%%%%%%%%%%%%%%%%%%%%%%%%%%%
%%%%%%%%    %%%%%%%%    %%%%%%%%    %%%%%%%%    %%%%%%%%    %%%%%%%%    %%%%%%%%
%%%%%%%%%%%%%%%%%%%%%%%%%%%%%%%%%%%%%%%%%%%%%%%%%%%%%%%%%%%%%%%%%%%%%%%%%%%%%%%%
\medskip

\section{Introduction}

%%%%%%%%%%%%%%%%%%%%%%%%%%%%%%%%%%%%%%%%%%%%%%%%%%%%%%%%%%%%%%%%%%%%%%%%%%%%%%%%
%%%%%%%%    %%%%%%%%    %%%%%%%%    %%%%%%%%    %%%%%%%%    %%%%%%%%    %%%%%%%%
%%%%%%%%%%%%%%%%%%%%%%%%%%%%%%%%%%%%%%%%%%%%%%%%%%%%%%%%%%%%%%%%%%%%%%%%%%%%%%%%

The interest in computing explicit formulas for resultants and discriminants goes back to B\'ezout, Cayley, Sylvester and many others in the eighteenth and nineteenth centuries. 
The last few decades have yielded a rise of interest in the implicitization of geometric objects motivated by applications in computer aided geometric design and geometric modeling as can be seen in for example in \cite{Kal90,MC92,MC92alg,AGR95,SC95}. This phenomena has been emphasized in the latest years due to the increase of computing power (cf.\ \cite{AS01,co01,BCD03,BuJo03,BC05,BCJ06,BD07,Bot08,BDD08,Bot09}). 

Under suitable hypotheses, resultants give the answer to many problems in elimination theory, including the implicitization of rational maps. 
In turn, both resultants and discriminants can be seen as the implicit equation of the image of a suitable map (cf.\ \cite{DFS07}). Lately, rational maps appeared in computer-engineering contexts, mostly applied to shape modeling using computer-aided design methods for curves and surfaces. A very common approach is to write the implicit equation as the determinant of a matrix whose entries are easy to compute.

Rational algebraic curves and surfaces can be described in several different ways, the most common being parametric and implicit representations. Parametric representations describe such curve or surface as the image of a rational map, whereas implicit representations describe it as the zero locus of a certain algebraic equation, e.g.\ as the zeros of a polynomial. Both representations have a wide range of applications in Computer Aided Geometric Design (CAGD), and depending on the problem one needs to solve, one or the other might be better suited. It is thus interesting to be able to pass from parametric representations to implicit equations. This is a classical problem and there are numerous approaches to its solution. For a good historical overview on this subject we refer the reader to \cite{SC95} and \cite{co01}.

Assume $x(s,t,u),\, y(s,t,u),\,z(s,t,u)$ and $w(s,t,u)$ are homogeneous polynomials of the same degree $d$ such that the parametrization
\begin{equation} \label{eqCassicStyle}
(s:t:u)\mapsto \paren{\frac{x(s,t,u)}{w(s,t,u)}:\frac{y(s,t,u)}{w(s,t,u)}:\frac{z(s,t,u)}{w(s,t,u)}:1}
\end{equation}
defines a surface in $\PP^3$.  The implicitization problem consists in the computation of a homogeneous polynomial $H(X,Y,Z,W)$ whose zero locus defines the scheme-theoretic closure of the surface given as the image of the parametrization.

However, it turns out that the implicitization problem is computationally difficult. The implicit equation can always be found using Gr\"obner bases. However, complexity issues mean that in practice, this method is rarely used in geometric modeling, especially in situations where real-time modeling is involved.  A more common method for finding the implicit equation is to eliminate $s,t,u$ by computing the resultant of the three polynomials
\[
x(s,t,u)-Xw(s,t,u),\ y(s,t,u)-Yw(s,t,u),\ z(s,t,u)-Zw(s,t,u).
\]
But in many applications, the resultant vanishes identically due to the presence of base points, which are common zeros in $\PP^2$ of all polynomials $x,y,z$ and $w$.

In consequence, the search of formulas for implicitization rational surfaces with base points is a very active area of research due to the fact that, in practical industrial design, base points show up quite frequently.  In \cite{MC92alg}, a perturbation is applied to resultants in order to obtain a nonzero multiple of the implicit equation. Many other approaches have been done in this direction. Lately, in \cite{BuJo03, BC05,BCJ06,BD07, BDD08,Bot09} it is shown how to compute the implicit equation as the determinant of the approximation complexes.

In \cite{KD06,BD07,BDD08,Bot09} different compactifications of the domain have been considered in order to erase base points, emphasizing the choice of the toric compactifications that better suits the monomial structure of the defining polynomials.

In this article, we present a method for computing the implicit equation of a hypersurface given as the image of a finite rational map $\phi: \Xc \dashrightarrow \PP^n$,  where $\Xc$ is a normal toric variety of dimension $n-1$. In \cite{BDD08} and \cite{Bot09}, the approach consisted in embedding $\Xc$ into a projective space, via a toric embedding. The need of the embedding comes from the necessity of a $\ZZ$-grading in the coordinate ring of $\Xc$, in order to study its regularity. 

Our contribution is to exploit the natural structure of the homogeneous coordinate ring of the toric variety where the map is defined. Thus, we present a novel approach to the method in \cite{BD07,BDD08} and \cite{Bot09} avoiding embedding the toric variety, and dealing with the homogeneous structure in a more natural way. 

Indeed, we deal with the multihomogeneous structure of the coordinate ring $R$ of $\Xc$, whose grading is given by the divisor class group of $\Xc$, $\DT$. The main motivations for our change of perspective are that it is more natural to deal with the original grading on $\Xc$, and that the embedding leads to an artificial homogenization process that makes the effective computation slower, as the number of variables to eliminate increases.

In Definition \ref{defRegion} we introduce the ``good'' region in $\DT$ where the approximation complex $\Z.$ is acyclic and the symmetric algebra $\SIR$ has no $B$-torsion. Indeed, we define for $\gamma\in \DT$, 
\[
 \Region(\gamma):=\bigcup_{0<k< \min\{m,\cd_B(R)\}} (\SSup_B(\gamma)-k\cdot \gamma)\subset \DT
\]
This goes in the direction of proving the main theorem of this article, Theorem \ref{CorToricImplicit}. Precisely it asserts that, if $\Xc$ is a ($n-1$)-dimensional non-degenerate toric variety over a field $\kk$, and $R$ its Cox ring (cf.\ Section \ref{CoxRing}), given a finite rational map $\phi: \Xc \dto \PP^{n}$ defined by $n+1$ homogeneous elements of degree $\gamma\in\D$ with $\dim(V(I))\leq 0$ in $\Xc$ and $V(I)$ is almost a local complete intersection off $V(B)$, we prove in Theorem  \ref{CorToricImplicit} that, 
\[
 \det((\Z.)_\mu)=H^{\deg(\phi)}\cdot G \in \kk[\T],
\]
for all $\mu\notin \Region(\gamma)$, where $H$ stands for the irreducible implicit equation of the image of $\phi$, and $G$ is relatively prime polynomial in $\kk[\T]$. This result generalizes to the setting of abstract toric varieties  theorems \cite[Thm. 5.7]{BuJo03} and \cite[Thm. 10 and Cor. 12]{Bot09}.

%%%%%%%%%%%%%%%%%%%%%%%%%%%%%%%%%%%%%%%%%%%%%%%%%%%%%%%%%%%%%%%%%%%%%%%%%%%%%%%%
%%%%%%%%    %%%%%%%%    %%%%%%%%    %%%%%%%%    %%%%%%%%    %%%%%%%%    %%%%%%%%
%%%%%%%%%%%%%%%%%%%%%%%%%%%%%%%%%%%%%%%%%%%%%%%%%%%%%%%%%%%%%%%%%%%%%%%%%%%%%%%%

\medskip

\section{The Cox ring of a toric variety}\label{CoxRing}

Our main motivation for considering regularity in general $\G$-gradings comes from toric geometry. Among $\G$-graded rings, homogeneous coordinate rings of a toric varieties are of particular interest in geometry. In this section, we will overview some basic facts about Cox rings (cf.\ \cite{Cox95}). When $\Tc$ is a toric variety, $\G:=\DT$ is the divisor class group of $\Tc$, also called the Chow group of codimension one cycles of $\Tc$. The grading can be related geometrically with the action of this group on the toric variety, and hence, the graded structure on the ring can be interpreted in terms of global sections of the structural sheaf of $\Tc$ and in terms of character and valuations (cf.\ \cite{CoxTV}).

Henceforward, let $\Delta$ be a non-degenerate fan in the lattice $\NNbm\cong \ZZ^{n-1}$, and let $\Tc$ be a toric variety associated to $\Delta$. Write $\Delta(i)$ for the set of $i$-dimensional cones in $\Delta$. There is a bijection between the set $\Delta(i)$ and the set of closed $i$-dimensional subvarieties of $\Tc$. In particular, each $\rho\in \Delta(1)$ corresponds to the Weil divisor $D_\rho\in \ZZ^{\Delta(1)}\cong \ZZ^{n-1}$.

Suppose that $\rho_1,\hdots,\rho_{s}\in \Delta(1)$ are the one-dimensional cones of $\Delta$ and assume $\Delta(1)$ spans $\RR^{n-1}$. As before, $\eta_{\rho_i}$ denotes the primitive generator of $\rho_i \cap \NNbm$. There is a map $\MMbm\nto{\bfrho}{\to} \ZZ^{\Delta(1)}: m\mapsto \sum_{i=1}^s\gen{m,\eta_{\rho_i}}D_{\rho_i}$. We will identify $[D_{\rho_i}]$ with a variable $x_i$.

The divisor classes correspond to the elements of the cokernel $\DT$ of this map $\bfrho$, getting an exact sequence
\[
 0\to \ZZ^{n-1}\cong \MMbm \nto{\bfrho}{\lto} \ZZ^{s} \nto{\pi}{\lto} \DT \to 0.
\]

Set $R:=k[x_1\hdots,x_{s}]$. From the sequence above we introduce in $R$ a $\DT$-grading, which is coarser than the standard $\ZZ^{n-1}$-grading.

\medskip

To any non-degenerate toric variety $\Tc$, is associated an homogeneous coordinate ring, called the \textsl{Cox ring of $\Tc$} (cf.\ \cite{Cox95}). D.\ A.\ Cox defines (loc.\ cit.) the homogeneous coordinate ring of $\Tc$ to be the polynomial ring $R$ together with the given $\DT$-grading. We next discuss briefly this grading. A monomial $\prod x_i^{a_i}$ determines a divisor $D=\sum_i a_iD_{\rho_i}$, this monomial will be denoted by $\x^D$. For a monomial $\x^D\in R$ we define its degree as $\deg(\x^D)=[D]$ in $\DT$.

Cox remarks loc.\ cit.\ that the set $\Delta(1)$ is enough for defining the graded structure of $R$, but the ring $R$ and its graded structure does not suffice for reconstructing the fan. In order not to lose the fan information, we need to consider the irrelevant ideal
\[
 B:= \spann{\prod_{\eta_{\rho_i}\notin \sigma} x_i : \sigma \in \Delta},
\]
where the product is taken over all the $\eta_{\rho_i}$ such that the ray $\rho_i$ is not contained as an edge in any cone $\sigma\in \Delta$. Finally, the Cox ring of $\Tc$ will be the $\DT$-graded polynomial ring $R$, with the irrelevant ideal $B$.

\medskip

Given a $\DT$-graded $R$-module $M$, Cox constructs a quasi-coherent sheaf $M^\sim$ on $\Tc$ by localizing just as in the case of projective space, and he shows that finitely generated modules give rise to coherent sheaves. It was shown by Cox (cf.\ \cite{Cox95}) for simplicial toric varieties, and by Mustata in general (cf.\ \cite{Mus02}), that every coherent $\OO_{\Tc}$-module may be written as $M^\sim$, for a finitely generated $\DT$-graded $R$-module $M$.

For any $\DT$-graded $R$-module $M$ and any $\delta\in \DT$ we may define $M[\delta]$ to be the graded module with components $M[\delta]_\epsilon = M_{\delta+\epsilon}$ and we set
\[
 H^i_{\ast}(\Tc,M^\sim):=\bigoplus_{\delta\in \DT}H^i(\Tc,M[\delta]^\sim).
\]
We have $H^0(\Tc,\OO_\Tc(\delta)) = R_\delta$, the homogeneous piece of $R$ of degree $\delta$, for each $\delta \in \DT$. In fact each $H^i_\ast(\Tc,\OO_\Tc)$ is a $\ZZ^{n-1}$-graded $R$-module. For $i>0$, by (cf.\ \cite[Prop. 1.3]{Mus02}),
\begin{equation}\label{equLC-SCforS}
 H^i_{\ast}(\Tc,M^\sim)\cong H^{i+1}_B(M):= \colimit{j} \ext^i_R(R/B^j,R).
\end{equation}
and there is an exact sequence $0\lto H^0_B(M)\lto M \lto H^0_{\ast}(\Tc,M^\sim)\lto H^{1}_B(M)\lto 0$.

We will use these results in the following sections for computing the vanishing regions of local cohomology of Koszul cycles.

%%%%%%%%%%%%%%%%%%%%%%%%%%%%%%%%%%%%%%%%%%%%%%%%%%%%%%%%%%%%%%%%%%%%%%%%%%%%%%%%
%%%%%%%%    %%%%%%%%    %%%%%%%%    %%%%%%%%    %%%%%%%%    %%%%%%%%    %%%%%%%%
%%%%%%%%%%%%%%%%%%%%%%%%%%%%%%%%%%%%%%%%%%%%%%%%%%%%%%%%%%%%%%%%%%%%%%%%%%%%%%%%

\medskip

\section{Regularity for $\G$-graded Koszul cycles and homologies}\label{GRegularity}
Throughout this article let $\G$ be a finitely generated abelian group, and let $R$ be a commutative noetherian $\G$-graded ring with unity. Let $B$ be an homogeneous ideal of $R$. Take $m$ a positive integer and let $\f:=(f_0,\hdots,f_m)$ be a tuple of homogeneous elements of $R$, with $\deg(f_i)=\gamma_i$, and set $\bfgamma:=(\gamma_0,\hdots,\gamma_m)$. Write $I=(f_0,\hdots,f_m)$ for the homogeneous $R$-ideal generated by the $f_i$.

As we mentioned, our main motivation for considering regularity in general $\G$-gradings comes from toric geometry. When $\Xc$ is a toric variety, $\G:=\DT$ is the divisor class group of $\Xc$. In this case, the grading can be related geometrically with the action of this group on the toric variety. Thus, it is of particular interest the case where $R$ is a polynomial ring in $n$ variables and $\G=\ZZ^n/K$, is a quotient of $\ZZ^n$ by some subgroup $K$ and quotient map $\pi$. Note that, if $M$ is a $\ZZ^n$-graded module over a $\ZZ^n$-graded ring, and $\G=\ZZ^n/K$, we can give to $M$ a $\G$-grading coarser than its $\ZZ^n$-grading. For this, define the $\G$-grading on $M$ by setting, for each $\mu\in \G$, $M_\mu:=\bigoplus_{d\in \pi^{-1}(\mu)}M_d$.

\medskip
Next, we present several results concerning vanishing of graded parts of certain modules. In our applications we will mainly focus on vanishing of Koszul cycles and homologies. We recall here what the support of a graded modules $M$ is.

In order to fix the notation, we state the following definitions concerning local cohomology of graded modules, and support of a graded modules $M$ on $\G$. Recall that the cohomological dimension $\cd_B(M)$ of a module $M$ is defined as $\cd_B(M):=\inf\{i\in \ZZ:\ H^j_B(M)= 0\ \forall j>i\}$ and that $\depth_B(M):=\inf\{i\in \ZZ:\ H^i_B(M)\neq 0\}$. We will write $\grade(B):=\depth_B(R)$.

\begin{defn}\label{defSuppGP}
Let $M$ be a $\G$-graded $R$-module, the support of the module $M$ in $\G$ is $\Supp_\G(M):=\{\mu \in \G:\ M_\mu \neq 0\}$.
\end{defn}

\begin{thm}\label{ThmRegHGral}
 For a complex $\C.$ of graded $R$-modules, assume that one of the following conditions holds
\begin{enumerate}
 \item For some $q\in \ZZ$, $H_j(\C.)=0$ for all $j<q$ and, $\cd_B(H_j(\C.))\leq 1$ for all $j>q$.  
 \item $\cd_B(H_j(\C.))\leq 1$ for all $j\in \ZZ$.
\end{enumerate}
Then for any $i$, 
\[
 \Supp_\G(H^i_B(H_j(\C.)))\subset \bigcup_{k\in\ZZ}\Supp_\G(H^{i+k}_B(C_{j+k})).
\]
\end{thm}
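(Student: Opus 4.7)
The plan is to deduce the inclusion from the two hypercohomology spectral sequences of $R\Gamma_B$ applied to $\C.$. Taking a $\G$-graded Cartan--Eilenberg injective resolution of $\C.$ and applying $\Gamma_B$ termwise yields a $\G$-graded double complex whose totalization computes $\mathbb{H}^n:=H^n(R\Gamma_B(\C.))$, and gives two convergent $\G$-graded spectral sequences with common abutment:
\[
A_2^{\,p,-j}=H^p_B(H_j(\C.))\ \Longrightarrow\ \mathbb{H}^{p-j},\qquad B_1^{\,p,q}=H^q_B(C_{-p})\ \Longrightarrow\ \mathbb{H}^{p+q},
\]
with the usual differentials $d_r\colon E_r^{p,q}\to E_r^{p+r,q-r+1}$.

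First, I would extract from the $B$-sequence the bound for the abutment. Since each $B_\infty^{p,q}$ is a subquotient of $B_1^{p,q}$ and $\mathbb{H}^n$ is filtered with associated graded $\bigoplus_{p+q=n}B_\infty^{p,q}$, reindexing $\ell=-p$ and then $\ell=j+k$ produces
\[
\Supp_\G(\mathbb{H}^{i-j})\ \subset\ \bigcup_{\ell\in\ZZ}\Supp_\G(H^{i-j+\ell}_B(C_\ell))\ =\ \bigcup_{k\in\ZZ}\Supp_\G(H^{i+k}_B(C_{j+k})).
\]
Thus it suffices to prove $\Supp_\G(H^i_B(H_j(\C.)))\subset \Supp_\G(\mathbb{H}^{i-j})$, which reduces to the degeneration $A_2^{\,i,-j}=A_\infty^{\,i,-j}$ at the slot of interest.

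Second, I would verify this degeneration under either hypothesis. Under (2), $\cd_B(H_j(\C.))\leq 1$ concentrates the $A_2$-page in the columns $p\in\{0,1\}$; any $d_r$ with $r\geq 2$ shifts the column index by $r\geq 2$ and hence vanishes at source or target. Under (1) a short case analysis against the threshold $q$ of the hypothesis suffices: for $j<q$ both sides are trivial; for $j=q$ the incoming differential originates in a row with vanishing homology while the outgoing one lands at column $\geq 2$ in a row where $\cd_B\leq 1$ forces it to vanish; for $j>q$ only $i\in\{0,1\}$ contributes, so the incoming source lies in a negative column while the outgoing target is again in the $\cd_B\leq 1$ regime at column $\geq 2$.

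The main obstacle is precisely this case analysis under (1), since the row $j=q$ carries homology of unrestricted cohomological dimension and therefore a priori sources arbitrary higher differentials. The decisive observation is that any such differential emanating from row $q$ must land at column $\geq 2$ on a row where the homology now does satisfy $\cd_B\leq 1$, so the target still vanishes at the $A_2$-page. Once this degeneration is in hand, combining it with the $B$-sequence bound concludes the proof.
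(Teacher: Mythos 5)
Your proposal is correct and follows essentially the same route as the paper: the paper also plays off the two spectral sequences of the double complex $\check{\Cc}^\bullet_B\,\C.$ (your $R\Gamma_B$ with a Cartan--Eilenberg resolution is just a different realization of the same hypercohomology), showing that hypotheses (1) or (2) force the $'E_2$-page $H^i_B(H_j(\C.))$ to equal $'E_\infty$, and then bounding the abutment by the $''E_1$-page $H^i_B(C_j)$. Your explicit case analysis of the differentials in and out of the row $j=q$ merely spells out the degeneration that the paper asserts in one line, so the two arguments coincide in substance.
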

\begin{proof}
Consider the two spectral sequences that arise from the double complex $\check \Cc^\bullet_B \C.$. The first spectral sequence has as second screen $ _2'E^i_j = H^i_B (H_j(\C.))$. From $(1)$ or $(2)$, we deduce that $_2'E^i_j = 0$ for $j\neq 0$ and $i\neq 0,1$, and for either $j<0$ or for $j=0$ and $i\neq 0,1$. It follows that $_\infty 'E^i_j =\, _2'E^i_j = H^i_B (H_j(\C.))$.

The second spectral sequence has as first screen $ _1''E^i_j = H^i_B (C_j)$. 

By comparing both spectral sequences, we deduce that, for fixed $\mu \in \G$, the vanishing of $H^{i+k}_B (C_{j+k})_\mu$ for all $k$ implies the vanishing of $H^i_B (H_{j}(\C.))_\mu$.
\end{proof}

We see from Theorem \ref{ThmRegHGral} that much of the information on the supports of the local cohomologies of the homologies of a complex $\C.$ is obtained from the supports of the local cohomologies of the complex. For instance, if $\C.$ is a free resolution of a graded $R$-module $M$, the supports of the local cohomologies of $M$ can be controlled in terms of the supports of the local cohomologies of the base ring $R$, and the shifts appearing in the $C_i$'{}s.

In order to lighten the reading of this article, we extend the definition as follows 

\begin{defn}\label{defdefSuppSigma}
 Let $M$ be a graded $R$-module. For every $\gamma \in \G$, we define
\begin{equation}\label{defSuppSigma}
 \SSup_B(\gamma;M):= \bigcup_{k\geq 0}(\Supp_\G(H^{k}_B(M))+k\cdot\gamma).
\end{equation}
We will write $\SSup_B(\gamma):=\SSup_B(\gamma;R)$.
\end{defn}

For an $R$-module $M$, we denote by $M[\mu]$ the shifted module by $\mu\in \G$, with $M[\mu]_{\gamma}:=M_{\mu+\gamma}$, hence, $\SSup_B(\gamma;M[\mu])=\SSup_B(\gamma;M)-\mu$.

\medskip

We apply Theorem \ref{ThmRegHGral} in the particular case where $\C.$ is a Koszul complex and we bound the support of the local cohomologies of its homologies in terms of the sets $\SSup_B(\gamma;M)$. Indeed, let $M$ be a $\G$-graded $R$-module. Denote by $\k.^M$ the Koszul complex $\k.(\f;R)\otimes_R M$. If  $\f=\{f_0,\hdots,f_m\}$ and $f_i$ are $\G$-homogeneous elements of $R$ of the same degree $\gamma$ for all $i$, the Koszul complex $\k.^M$ is $\G$-graded with $K_i:=\bigoplus_{l_0<\cdots<l_i}R(-i\cdot \gamma)$. Let $Z_i^M$ and $B_{i}^M$ be the Koszul $i$-th cycles and boundaries modules, with the grading that makes the inclusions $Z_i^M, B_{i}^M\subset K_i^M$ a map of degree $0\in \G$, and set $H_i^M=Z_i^M/ B_{i}^M$.

\begin{cor}\label{Ex1Koszul}
If $\cd_B(H_i^M)\leq 1$ for all $i>0$. Then, for all $j\geq 0$
\[
\Supp_\G(H^i_B(H_j^M))\subset \bigcup_{k\geq 0}(\Supp_\G(H^{k}_B(M))+k\cdot\gamma) + (j-i)\cdot\gamma.
\]
\end{cor}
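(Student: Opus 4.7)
The approach is a direct application of Theorem \ref{ThmRegHGral} to the Koszul complex $\C. := \k.^M$. I would first verify that the hypotheses of condition (1) of that theorem are met with $q = 0$: since the Koszul complex is concentrated in non-negative degrees, $H_j(\C.) = H_j^M = 0$ for $j < 0$, and by assumption $\cd_B(H_j^M) \leq 1$ for $j > 0$. The theorem then yields, for every $i$ and every $j \geq 0$,
\[
\Supp_\G(H^i_B(H_j^M)) \subset \bigcup_{k \in \ZZ} \Supp_\G(H^{i+k}_B(K_{j+k}^M)).
\]

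Next I would rewrite the right-hand side using the explicit form of the Koszul modules. Since all $f_l$ have the same degree $\gamma$, each $K_p^M$ is a finite direct sum of copies of the shifted module $M[-p\gamma]$, and hence $H^{i+k}_B(K_{j+k}^M)$ is a direct sum of copies of $H^{i+k}_B(M)[-(j+k)\gamma]$. Using the elementary identity $\Supp_\G(N[\mu]) = \Supp_\G(N) - \mu$ (recorded just before Definition \ref{defdefSuppSigma}), this gives
\[
\Supp_\G(H^{i+k}_B(K_{j+k}^M)) = \Supp_\G(H^{i+k}_B(M)) + (j+k)\gamma.
\]

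The last step is purely a reindexing: setting $k' := i + k$, the union runs over $k' \in \ZZ$, but because local cohomology vanishes in negative cohomological degree only the indices $k' \geq 0$ contribute. The shift $(j+k)\gamma$ becomes $(k' - i + j)\gamma = k'\gamma + (j - i)\gamma$, so factoring out the global shift $(j-i)\gamma$ recovers the stated inclusion
\[
\Supp_\G(H^i_B(H_j^M)) \subset \bigcup_{k \geq 0}\bigl(\Supp_\G(H^{k}_B(M)) + k\gamma\bigr) + (j-i)\gamma.
\]
The only non-mechanical point in the whole argument is verifying the applicability of Theorem \ref{ThmRegHGral}; everything else is degree bookkeeping, so I do not foresee a substantive obstacle.
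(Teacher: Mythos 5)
Your proof is correct and follows essentially the same route as the paper: apply Theorem \ref{ThmRegHGral} to the Koszul complex $\k.^M$, identify $K_p^M$ as a direct sum of copies of $M$ shifted by $-p\gamma$, and then reindex the union and shift degrees. Your explicit verification of hypothesis (1) with $q=0$ and of why only indices $k\geq 0$ contribute merely spells out steps the paper leaves implicit.
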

\begin{proof}
This follows by a change of variables in the index $k$ in Theorem \ref{ThmRegHGral}. Since $\C.$ is $\k.^M$ and $K_i^M:=\bigoplus_{l_0<\cdots<l_i}M(-i\cdot \gamma)$, we get that 
\[
\Supp_\G(H^i_B(H_j^M))\subset \bigcup_{k\geq 0}\Supp_\G(H^{k}_B(K^M_{k+j-i})) = \bigcup_{k\geq 0}(\Supp_\G(H^{k}_B(M)[(i-k-j)\cdot\gamma]).
\]
 The conclusion follows by shifting degrees.
\end{proof}

\begin{rem}\label{RemEx1Koszul}
 In the special case where $M=R$, we deduce that if $\cd_B(H_i)\leq 1$ for all $i>0$, then
\[
 \Supp_\G(H^i_B(H_j))\subset  \bigcup_{k\geq 0}(\Supp_\G(H^{k}_B(R))+k\cdot\gamma) + (j-i)\cdot\gamma, \quad \mbox{for all }i,j.
\]
\end{rem}

Thus, if $\cd_B(H_i^M)\leq 1$ for all $i>0$, then, for all $j\geq 0$
\[
\Supp_\G(H^i_B(H_j^M))\subset \SSup_B(\gamma;M) + (j-i)\cdot\gamma.
\]

\begin{rem}\label{Ex2Koszul}
Write $I:=(f_1,\hdots,f_m)$. The case $j=0$ in Remark \ref{RemEx1Koszul} gives 
\[
 \Supp_\G(H^i_B(R/I))\subset  \bigcup_{k\geq 0}(\Supp_\G(H^{k}_B(R))+(k-i)\cdot\gamma), \quad \mbox{for all }i.
\]
\end{rem}

The next result determines the supports of Koszul cycles in terms of the sets $\SSup_B(\gamma)$. This will be essential in our applications, since the acyclicity region of the $\Zc$-complex and the torsion of the symmetric algebra $\SIR$ will be computed directly from these regions.

\begin{lem}\label{LemMultiLCRGral} Assume $f_0,\hdots,f_m\in R$ are homogeneous elements of same degree $\gamma$. Write $I=(f_0,\hdots,f_m)$. Fix a positive integer $c$. If $\cd_B(R/I)\leq c$, then the following hold
\begin{enumerate}
 \item $\Supp_\G(H^i_B(Z_q))\subset  (\SSup_B(\gamma)+(q+1-i)\cdot \gamma)\cup(\bigcup_{k\geq 0} \Supp_\G(H^{i+k}_B(H_{k+q}))\cdot \gamma)$, for $i\leq c$ and all $q\geq 0$.
 \item $\Supp_\G(H^i_B(Z_q))\subset \SSup_B(\gamma)+(q+1-i)\cdot \gamma$, for $i>c$ and all $q\geq 0$.
\end{enumerate}
\end{lem}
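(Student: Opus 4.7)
The plan is to prove (1) and (2) simultaneously by descending induction on $q$, from $q=m$ down to $q=0$, using two short exact sequences of Koszul cycles, boundaries, and homologies together with the long exact sequences they induce in $H^{*}_B$. The single preparatory fact I need is that $I$ annihilates each Koszul homology $H_j$, so that $H_j$ is a finitely generated $R/I$-module and consequently $\cd_B(H_j)\leq \cd_B(R/I)\leq c$; in particular $H^p_B(H_j)=0$ for all $p>c$ and all $j$. This is exactly what will collapse (1) to (2) in the range $i>c$ by killing every term of the error union.

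For the base case $q=m$, the equality $K_{m+1}=0$ forces $B_m=0$ and $Z_m=H_m$, so $\Supp_\G(H^i_B(Z_m))=\Supp_\G(H^i_B(H_m))$; this is already the $k=0$ term of the union in (1), and it is empty when $i>c$, giving (2). For the inductive step at $q<m$, I would use
\[
0\to B_q\to Z_q\to H_q\to 0,\qquad 0\to Z_{q+1}\to K_{q+1}\to B_q\to 0,
\]
whose long exact sequences in $H^{*}_B$ give
\[
\Supp_\G(H^i_B(Z_q))\subset \Supp_\G(H^i_B(H_q))\cup \Supp_\G(H^i_B(B_q))
\]
and
\[
\Supp_\G(H^i_B(B_q))\subset \Supp_\G(H^i_B(K_{q+1}))\cup \Supp_\G(H^{i+1}_B(Z_{q+1})).
\]
Since $K_{q+1}$ is a direct sum of copies of $R(-(q+1)\gamma)$, the definition of $\SSup_B(\gamma)$ yields $\Supp_\G(H^i_B(K_{q+1}))\subset \SSup_B(\gamma)+(q+1-i)\gamma$. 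Applying the inductive hypothesis to $H^{i+1}_B(Z_{q+1})$, the $\SSup_B(\gamma)$ shift is $(q+1)+1-(i+1)=q+1-i$, matching the previous one, while the error union reindexes (with $k'=k+1$) to $\bigcup_{k'\geq 1}\Supp_\G(H^{i+k'}_B(H_{k'+q}))$; combined with the $k'=0$ contribution $H^i_B(H_q)$ from the first sequence, this yields the full union $\bigcup_{k\geq 0}\Supp_\G(H^{i+k}_B(H_{k+q}))$ claimed in (1). For (2), if $i>c$ every $H^{i+k}_B(H_{k+q})$ with $k\geq 0$ vanishes by the preparatory fact, so the error union disappears and one is left with $\SSup_B(\gamma)+(q+1-i)\gamma$.

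The main obstacle is choosing the correct direction of the induction. Ascending from $q=0$, using $0\to Z_q\to K_q\to B_{q-1}\to 0$ and $0\to B_{q-1}\to Z_{q-1}\to H_{q-1}\to 0$, would only produce a shift of $(q-i)\gamma$ from $K_q$ and could not match the $(q+1-i)\gamma$ that appears in the statement. Descending induction repairs this because the bound for $B_q$ passes through $K_{q+1}$, whose local cohomology carries the extra $\gamma$-shift, and the inductive input on $Z_{q+1}$ at cohomological degree $i+1$ preserves it. The only remaining bookkeeping is the reindexing $k\mapsto k-1$ in the error union, which must be tracked carefully so that the $k=0$ term provided by $H_q$ and the $k\geq 1$ terms provided by the inductive hypothesis fit together into the single union claimed in (1).
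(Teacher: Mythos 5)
Your argument is correct in substance, but it takes a genuinely different route from the paper. The paper proves both items in one stroke by comparing the two spectral sequences of the double complex $\check{\Cc}^\bullet_B(\k.^{\geq q})$ attached to the truncated Koszul complex $0\to K_{m+1}\to\cdots\to K_{q+1}\to Z_q\to 0$: the horizontal one degenerates to $H^i_B(H_j)$, which vanishes for $i>c$ by the same support fact you invoke ($\Supp(H_j)\subset V(I)$, hence $\cd_B(H_j)\le \cd_B(R/I)\le c$), while the vertical one computes $H^i_B(Z_q)_\mu$ whenever $\mu\notin\bigcup_{k\geq 0}\Supp_\G(H^{i+k}_B(K_{q+k+1}))$, and this last union is exactly $\SSup_B(\gamma)+(q+1-i)\cdot\gamma$. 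Your descending induction with the short exact sequences $0\to B_q\to Z_q\to H_q\to 0$ and $0\to Z_{q+1}\to K_{q+1}\to B_q\to 0$ unwinds that hypercohomology comparison into elementary long exact sequences; the shift bookkeeping ($\Supp_\G(H^i_B(K_{q+1}))\subset\SSup_B(\gamma)+(q+1-i)\cdot\gamma$, and $(q+1)+1-(i+1)=q+1-i$ from the inductive input) and the reassembly of the $k=0$ term with the reindexed $k\geq 1$ terms are done correctly, and the vanishing $H^{i+k}_B(H_{k+q})=0$ for $i>c$ collapses (1) to (2) just as in the paper. What your version buys is the avoidance of spectral sequences; what the paper's buys is a single comparison valid for all $q$ at once, which is then reused as the template for Theorem~\ref{ToricZacyclic2}. (Your reading of item (1), with no $\gamma$-shift on the union $\bigcup_{k\geq 0}\Supp_\G(H^{i+k}_B(H_{k+q}))$, is the intended one and matches the paper's proof.)

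One indexing slip to fix: there are $m+1$ forms $f_0,\hdots,f_m$, so the Koszul complex here has top term $K_{m+1}\neq 0$ (it is $K_{m+2}$ that vanishes). Hence your base case is off by one: it should be $q=m+1$, where $B_{m+1}=0$ and $Z_{m+1}=H_{m+1}$ (or any $q>m+1$, where $Z_q=0$), and the first genuine inductive step, from $q=m+1$ to $q=m$, uses $0\to Z_{m+1}\to K_{m+1}\to B_m\to 0$. This does not affect the structure of the argument, but as written the claim ``$K_{m+1}=0$ forces $B_m=0$ and $Z_m=H_m$'' is false under the paper's conventions.
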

\begin{proof} 
Consider $\k.^{\geq q}: 0\to K_{m+1}\to K_{m}\to \cdots\to K_{q+1}\to Z_{q}\to 0$ the truncated Koszul complex. The double complex $\check \Cc^\bullet_B(\k.^{\geq q})$ gives rise to two spectral sequences. The first one has second screen $_2'E^i_j = H^i_B (H_j)$. This module is $0$ if $i>c$ or if $j>m+1-\grade(I)$.
The other one has as first screen
\[
 _1''E^i_j = \left\lbrace\begin{array}{ll} H^i_B (K_j) & \mbox{for all }i>r, \mbox{ and } j< q\\
						H^i_B (Z_q) & \mbox{for }q=j\\
						0 & \mbox{for all }i\leq r, \mbox{ and } j< q. 
\end{array}\right.
\]

From the second spectral sequence we deduce that, if $\mu\in \G$ is taken in such a way that $H^{i+k}_B (K_{q+k+1})_{\mu}$ vanishes for all $k\geq 0$, then $( _\infty''E^i_q)_{\mu}= H^{i}_B (Z_{q})_{\mu}$. Hence, if 
\begin{equation}\label{eqSuppHikK}
\mu\notin \bigcup_{k\geq 0}\Supp_\G(H^{i+k}_B(K_{k+q+1})) = \bigcup_{k\geq 0}(\Supp_\G(H^{k+i}_B(R)[-(k+q+1)\cdot\gamma]),
\end{equation}
then $( _\infty''E^i_q)_{\mu}= H^{i}_B (Z_{q})_{\mu}$

Comparing both spectral sequences, we have $\mu\notin \bigcup_{k\geq 0}\Supp_\G(H^{i+k}_B(H_{k+q}))$, we get $( _\infty''E^i_q)_{\mu}=0$.
This last condition is automatic for $i> c$ since $H^{i+k}_B(H_{k+q})=0$ for all $k\geq 0$. 
\end{proof}

\begin{cor}\label{LemMultiLCRGral2} Assume $f_0,\hdots,f_m\in R$ are homogeneous elements of degree $\gamma$. Write $I=(f_0,\hdots,f_m)$. Fix an integer $q$. If $\cd_B(R/I)\leq 1$, then the following hold
\begin{enumerate}
 \item for $i=0,1$, $\Supp_\G(H^i_B(Z_q))\subset (\SSup_B(\gamma)+(q-i)\cdot \gamma) \cup (\SSup_B(\gamma)+(q+1-i)\cdot \gamma)$.
 \item for $i>1$, $\Supp_\G(H^i_B(Z_q))\subset \SSup_B(\gamma)+(q+1-i)\cdot \gamma$.
\end{enumerate}
\end{cor}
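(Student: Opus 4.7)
This corollary is the $c=1$ specialization of Lemma \ref{LemMultiLCRGral}, so the plan is to invoke that lemma and estimate the remaining Koszul-homology term using Corollary \ref{Ex1Koszul} (equivalently Remark \ref{RemEx1Koszul}). Part (2), the case $i>1$, is verbatim Lemma \ref{LemMultiLCRGral}(2) with $c=1$, so nothing further is needed there.

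For part (1), Lemma \ref{LemMultiLCRGral}(1) gives
\[
\Supp_\G(H^i_B(Z_q))\subset \bigl(\SSup_B(\gamma)+(q+1-i)\cdot\gamma\bigr)\cup \bigcup_{k\geq 0}\Supp_\G\bigl(H^{i+k}_B(H_{k+q})\bigr),
\]
so it suffices to show that the second union is contained in $\SSup_B(\gamma)+(q-i)\cdot\gamma$. The key observation is that the ideal $I=(f_0,\ldots,f_m)$ annihilates every Koszul homology module $H_j=H_j(\f;R)$, so each $H_j$ carries the structure of an $R/I$-module. Since $H^i_B$ on an $R/I$-module can be computed over $R/I$, this yields $\cd_B(H_j)\leq \cd_B(R/I)\leq 1$ for all $j$. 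The hypothesis of Corollary \ref{Ex1Koszul} (with $M=R$) is therefore satisfied, giving
\[
\Supp_\G\bigl(H^{i+k}_B(H_{k+q})\bigr)\subset \SSup_B(\gamma)+\bigl((k+q)-(i+k)\bigr)\cdot\gamma=\SSup_B(\gamma)+(q-i)\cdot\gamma
\]
for every $k\geq 0$, independent of $k$. Taking the union over $k$ and combining with the first piece of the bound from the lemma yields exactly the statement of part (1).

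There is no real obstacle: the only nontrivial input is the passage $\cd_B(R/I)\leq 1 \Rightarrow \cd_B(H_j)\leq 1$, which is immediate from the $R/I$-module structure on Koszul homologies, and the rest is a clean bookkeeping of indices in Corollary \ref{Ex1Koszul}. The statement could in fact be deduced directly by rerunning the spectral-sequence argument of Lemma \ref{LemMultiLCRGral} and observing that when $c=1$ the cases $i=0,1$ and $i>1$ split as in the conclusion, but invoking the lemma as a black box is shorter and avoids repeating the spectral-sequence computation.
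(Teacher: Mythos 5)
Your proof is correct and follows essentially the same route as the paper: part (2) is Lemma \ref{LemMultiLCRGral}(2) with $c=1$, and part (1) combines Lemma \ref{LemMultiLCRGral}(1) with the bound $\Supp_\G(H^{i+k}_B(H_{k+q}))\subset \SSup_B(\gamma)+(q-i)\cdot\gamma$ from Remark \ref{RemEx1Koszul}, exactly as in the paper's proof. Your explicit verification that $\cd_B(R/I)\leq 1$ forces $\cd_B(H_j)\leq 1$ (via the $R/I$-module structure on Koszul homology) is a correct justification of a hypothesis the paper uses only implicitly.
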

\begin{proof} 
Since $\Supp_\G(H^{i+k}_B(H_{k+q}))\subset \SSup_B(\gamma)+(q-i)\cdot \gamma$ for all $k\geq 0$ by Remark \ref{RemEx1Koszul}, gathering together this with equation \eqref{eqSuppHikK} and Lemma \ref{LemMultiLCRGral}, the result follows.
\end{proof}

\begin{rem}\label{LemCyclesGral2} 
 We also have empty support for Koszul cycles in the following cases.
\begin{enumerate}
 \item if $\grade(B)\neq 0$, $H_B^{0} (Z_{p})=0$ for all $p>0$, and
 \item if $\grade(B)\geq 2$, $H^1_B(Z_p)=0$ for all $p>0$.
\end{enumerate}
\end{rem}
\begin{proof}
 The first claim follows from the inclusion $Z_p\subset K_p$ and the second from the exact sequence $0 \to Z_{p}\to K_{p}\to B_{p-1}\to 0$ that gives $0\to H_B^{0} (B_{p-1})\to H_B^{1} (Z_{p})\to H_B^{1} (K_{p})$, with $H_B^{0} (B_{p-1})$ as $B_{p-1}\subset K_{p-1}$.
\end{proof}

\section{$\G$-graded approximation complexes}\label{GPolyAppCom}

We treat in this part the case of a finitely generated abelian group $\G$ acting on a polynomial ring $R$. Write $R:=\kk[X_1,\hdots,X_n]$. Take $H$ a subgroup of $\ZZ^n$ and assume $\G= \ZZ^n/H$.

Take $m+1$ homogeneous elements $\f:=f_0,\hdots,f_m\in R$ of fixed degree $\gamma\in\G$. Set $I=(f_0,\hdots,f_m)$ the homogeneous ideal of $R$ defined by $\f$. Recall that $\RIR:=\bigoplus_{l\geq 0} (It)^l\subset R[t]$. It is important to observe that the grading in $\RIR$ is taken in such a way that the natural map $\alpha: R[T_0,\hdots,T_m] \to \RIR \subset R[t]: T_i\mapsto f_i t$ is of degree zero, and hence, $(It)^l\subset R_{l \gamma}\otimes_\kk \kk[t]_l$.

\medskip

Let $\T:=T_0,\hdots,T_m$ be $m+1$ indeterminates. There is a surjective map of rings $\alpha: R[\T] \twoheadrightarrow \RIR$ with kernel $\pp:=\ker(\alpha)$. 

\begin{rem}
 Observe that $\pp\subset R[\T]$ is $(\G\times\ZZ)$-graded. Set $\pp_{(\mu;b)}\subset R_{\mu}\otimes_\kk \kk[\T]_b$, and $\pp_{(*,0)}=0$. Denote $\bb:=(\pp_{(*,1)})=\paren{\{\sum g_iT_i :g_i\in R, \sum g_if_i=0\}}$. In other words, $\bb$ is $R[\T]$-ideal generated by $\textnormal{Syz}(\f)$.
\end{rem}

The natural inclusion $\bb\subset \pp$ gives a surjection $\beta:\SIR\cong R[\T]/\bb {\twoheadrightarrow} R[\T]/\pp \cong \RIR$ that makes the following diagram commute
\begin{equation}\label{diagReesSym}
 \xymatrix@1{ 
 0\ar[r] &\bb \ar[r] \ar@{^(->}[d] &R[\T] \ar[r] \ar@{=}[d] &\SIR \ar[r] \ar@{>>}[d]^{\beta} &0 \\
 0\ar[r] &\pp \ar[r] &R[\T] \ar[r]^{\alpha} &\RIR \ar[r] &0
}
\end{equation}

Set $\k.= \k. ^R(\f)$ for the Koszul complex of $\f$ over the ring $R$. Write $K_i:=\bigwedge^i R[-i \gamma]^{m+1}$, and $Z_i$ and $B_i$ for the $i$-th module of cycles and boundaries respectively. We write $H_i=H_i(\f;R)$ for the $i$-th Koszul homology module. 

We write $\Z.$, $\B.$ and $\M.$ for the approximation complexes of cycles, boundaries and homologies (cf. \cite{HSV1}, \cite{HSV2} and \cite{Va94}). Define $\Zc_\ell=Z_\ell[\ell\gamma]\otimes_R R[\T]$, where  $(Z_\ell[\ell\gamma])_{\mu}=(Z_\ell)_{\ell\gamma+\mu}$. Similarly we define $\Bc_\ell=B_\ell[\ell\gamma]\otimes_R R[\T]$ and $\Mc_\ell=H_\ell[\ell\gamma]\otimes_R R[\T]$. Note that since $R[\T]$ is $(\G\times\ZZ)$-graded, then also the complexes $\Z.$, $\B.$ and $\M.$ are $(\G\times\ZZ)$-graded.

Let us recall some basic facts about approximation complexes that will be useful in the sequel. In particular, recall that the ideal $J\subset R$ is said to be of \textit{linear type} if $\SIR\simeq \RIR$ (cf. \cite{Va94}).

\begin{defn}
 The sequence $a_1,\hdots,a_\ell$ in $R$ is said to be a proper sequence if $a_{i+1} H_j(a_1,\hdots,a_i;R)=0$, for all $0\leq i \leq \ell, 0 < j \leq i$. 
\end{defn}
Notice that an almost complete intersection ideal is generated by a proper sequence.

\medskip
Henceforward, we will denote $\HH_i:=H_i(\Z.)$ for all $i$.

\begin{lem}\label{LemaboutZ} With the notation above, the following statements hold:
\begin{enumerate}
 \item $\HH_0=\SIR$.
 \item $\HH_i$ is a $\SIR$-module for all $i$.
 \item If the ideal $I$ can be generated by a proper sequence then $\HH_i=0$ for $i>0$.
 \item If $I$ is generated by a $d$-sequence, then it can be generated by a proper sequence, and moreover, $I$ is of linear type.
\end{enumerate}
\end{lem}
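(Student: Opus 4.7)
The plan is to handle the four parts separately, relying on the standard machinery of approximation complexes developed by Herzog--Simis--Vasconcelos.

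For part (1), I would compute $\HH_0$ directly from the tail of $\Z.$. The complex ends with $\Zc_1 = Z_1[\gamma]\otimes_R R[\T] \to \Zc_0 = R[\T] \to 0$, and the differential, which replaces the Koszul multiplication by $f_i$ with multiplication by $T_i$, sends a cycle $(g_0,\dots,g_m)\in Z_1$ (so $\sum g_if_i=0$) to $\sum g_iT_i\in R[\T]$. Thus the image is exactly the ideal $\bb$ generated by $\textnormal{Syz}(\f)$, giving $\HH_0 = R[\T]/\bb = \SIR$.

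For part (2), I would show that $\bb$ annihilates every $\HH_i$. Given $\sigma=\sum g_jT_j\in\bb$ and a cycle $z\in \Zc_i$, the key is to produce a homotopy $h(z)\in \Zc_{i+1}$ with $d(h(z))=\sigma\cdot z$. This is the familiar argument in which one uses the cycle condition $\sum g_jf_j=0$ to build $h(z)$ by contracting against $(g_0,\dots,g_m)$ viewed as a Koszul element; writing the Koszul differential as $d_T$ (with $T_i$'s) and $d_f$ (with $f_i$'s) and using $d_f(z)=0$ makes the verification formal.

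For part (3), I would use the ``depth'' characterization of acyclicity of $\Z.$. The classical approach is to filter $\Z.$ by the Koszul short exact sequences $0\to Z_i\to K_i\to B_{i-1}\to 0$ and argue by induction on $i$; alternatively, one sets up the double complex coming from the Koszul complex of $\T$ tensored with that of $\f$, and uses the proper sequence hypothesis $f_{i+1}H_j(f_1,\dots,f_i;R)=0$ to kill the relevant differentials. The outcome, due to Herzog--Simis--Vasconcelos, is that $H_i(\Z.)=0$ for $i>0$. The hardest step here is to invoke (not to reprove) this acyclicity theorem cleanly.

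For part (4), I would first verify that a $d$-sequence is automatically a proper sequence: indeed if $f_1,\dots,f_\ell$ is a $d$-sequence then by definition $(f_1,\dots,f_i):f_{i+1}=(f_1,\dots,f_i):f_jf_{i+1}$ for $j\geq i+1$, which in particular implies $f_{i+1}\cdot H_j(f_1,\dots,f_i;R)=0$ for $j>0$. Then by part (3) the approximation complex is acyclic; since $\HH_0=\SIR$ by (1), the acyclicity together with the natural surjection $\SIR\twoheadrightarrow\RIR$ from diagram \eqref{diagReesSym} gives that the induced map is an isomorphism, i.e., $I$ is of linear type. Here the main obstacle is to justify cleanly that the acyclicity of $\Z.$ forces $\bb=\pp$, which one handles by comparing $\Z.$ to a resolution of $\RIR$ and reducing modulo $\f$; this is the classical Valla--Vasconcelos argument, which I would invoke rather than reprove.
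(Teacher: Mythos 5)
The paper's own proof of this lemma is a one-line citation to \cite{Va94} and \cite{HSV2}, so your strategy of invoking the classical results at the hard points is consistent with the text. Your arguments for (1) and (2) are correct: in (2) the homotopy is wedging with $e_g=\sum_j g_je_j$ rather than ``contracting'', and the verification uses all three identities $\sum_j g_jf_j=0$, $d_f z=0$ (i.e.\ $z\in\Zc_i$) and $d_T z=0$ (i.e.\ $z$ is a cycle of $\Z.$), but this is exactly the standard computation and the slip is cosmetic. Part (3) is a correct appeal to the Herzog--Simis--Vasconcelos acyclicity theorem for proper sequences.

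The genuine gap is the mechanism you sketch for (4). Acyclicity of $\Z.$ together with the surjection $\beta:\SIR\twoheadrightarrow\RIR$ does not force $\bb=\pp$: the complex $\Z.$ resolves $\SIR$, and its acyclicity carries no information about the kernel of $\beta$. If that inference were sound, then combined with (3) it would show that every ideal generated by a proper sequence --- for instance any almost complete intersection, as the paper notes right after the definition of proper sequence --- is of linear type; this is false, and in fact the entire paper rests on the opposite phenomenon, namely that $\SIR$ can have nonzero torsion (so $\beta$ is not injective) even when the relevant strands of $\Z.$ are acyclic. The linear-type statement in (4) is Huneke's and Valla's theorem on $d$-sequences (in the HSV framework it corresponds to the acyclicity of the $\M.$-complex, not of $\Z.$), and that is exactly what the citation to \cite{Va94} and \cite{HSV2} covers; your observation that a $d$-sequence is a proper sequence is the standard fact and is fine, but the linear-type conclusion must be quoted (or proved by the Huneke--Valla induction), not deduced from acyclicity of $\Z.$.
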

\begin{proof}
 For a proof of these facts we refer the reader to \cite{Va94} or \cite{HSV2}.
\end{proof}

Assume the ideal $V(I)=V(\f)$ is of linear type off $V(B)$, that is, for every prime $\qq\not\supset B$, $(\SIR)_\qq=(\RIR)_\qq$. 
The key point of study is the torsion of both algebras as $\kk[\T]$-modules. Precisely, by the same arguments as in the homogeneous setting, we have the following result.

\begin{lem} With the notation above, we have
\begin{enumerate}
 \item $\ann_{\kk[\T]}((\RIR)_{(\nu,*)})=\pp\cap \kk[\T]=\ker(\phi^*)$, if $R_\nu\neq 0$;
 \item if $V(I)$ is of linear type off $V(B)$ in $\Spec(R)$, then
\[
 \SIR/ H^0_B(\SIR)=\RIR.
\]
\end{enumerate}
\end{lem}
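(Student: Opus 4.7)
The plan is to exploit the embedding $\RIR\hookrightarrow R[t]$ into a polynomial domain, which makes $\RIR$ have no nontrivial $B$-torsion (since $B\neq 0$ contains a nonzerodivisor of $R[t]$), together with the linear type hypothesis on $V(I)$ off $V(B)$ to control the kernel of $\beta$.

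\textbf{Part (1).} The inclusion $\pp\cap\kk[\T]=\ker(\phi^*)\subset\ann_{\kk[\T]}((\RIR)_{(\nu,*)})$ is immediate, since $\pp=\ker(\alpha)$ already annihilates all of $\RIR$. For the reverse, given $P\in\kk[\T]$ annihilating $(\RIR)_{(\nu,*)}$, use the hypothesis $R_\nu\neq 0$ to pick a nonzero $r\in R_\nu=(\RIR)_{(\nu,0)}$; then $\alpha(P)\cdot r=0$ in $\RIR\subset R[t]$, and since $R[t]$ is a domain with $r\neq 0$, this forces $\alpha(P)=0$, i.e.\ $P\in\pp\cap\kk[\T]$.

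\textbf{Part (2).} From diagram \eqref{diagReesSym}, set $\Kc:=\ker(\beta)=\pp/\bb$; it suffices to show $\Kc=H^0_B(\SIR)$. For $H^0_B(\SIR)\subset\Kc$, the image $\beta(H^0_B(\SIR))$ is a $B$-torsion submodule of $\RIR\subset R[t]$; but $R[t]$ is a domain and $B\neq 0$, so any $x$ with $b^n\cdot\beta(x)=0$ for some $b\in B\setminus\{0\}$ must satisfy $\beta(x)=0$, giving $H^0_B(\SIR)\subset\ker(\beta)=\Kc$. For $\Kc\subset H^0_B(\SIR)$, the hypothesis translates (by passing a prime $\qq'$ of $R[\T]$ to $\qq=\qq'\cap R$ and re-localizing) into $\beta_{\qq'}$ being an isomorphism for every prime $\qq'$ of $R[\T]$ with $B\not\subset\qq'$; hence $\Kc_{\qq'}=0$ for all such $\qq'$, so $\Supp_{R[\T]}(\Kc)\subset V(B)$. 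Since $R[\T]$ is noetherian and $\Kc$ is a finitely generated $R[\T]$-module (it is a submodule of the cyclic module $\SIR$), some power of $B$ annihilates $\Kc$, yielding $\Kc\subset H^0_B(\SIR)$.

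The main (essentially notational) point is ensuring that the linear-type hypothesis on $\Spec(R)$ yields the vanishing $\Kc_{\qq'}=0$ on $\Spec(R[\T])$; once this is observed, the argument reduces cleanly to the fact that the Rees algebra is $B$-torsion free because it sits inside the domain $R[t]$.
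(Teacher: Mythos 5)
Your proof is correct and follows essentially the same route as the paper: part (1) by exploiting that $\RIR\subset R[t]$ is a domain (so a nonzero $r\in R_\nu$ detects $\alpha(P)=0$), and part (2) by localizing away from $V(B)$ to see that $\ker(\beta)$ is supported on $V(B)$, hence is exactly the $B$-torsion $H^0_B(\SIR)$, the reverse inclusion again coming from torsion-freeness of $\RIR$. The paper's proof is just a terser sketch of the same argument, which you have fleshed out with the prime-restriction $\qq'\mapsto\qq'\cap R$ and the annihilator argument.
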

\begin{proof}
 The first part follows from the fact that $\pp$ is $\G\times \ZZ$-homogeneous and as $\RIR$ is a domain, there are no zero-divisors in $R$. By localizing at each point of $\Spec(R)\setminus V(B)$ we have the equality of the second item.
\end{proof}

This result suggest that we can approximate one algebra by the other, when they coincide outside $V(B)$. 

\begin{lem}\label{ToricZacyclic1}
 Assume $B\subset \rad(I)$, then $\HH_i$ is $B$-torsion for all $i>0$.
\end{lem}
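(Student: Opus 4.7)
The plan is to establish the stronger statement that $\Supp_R(\HH_i) \subset V(B)$ for every $i>0$; this is precisely the condition that $\HH_i$ is $B$-torsion, whether one regards it as an $R$-module or as an $R[\T]$-module (since $B$ is extended from $R$). It thus suffices to prove that $(\HH_i)_\qq = 0$ for every prime $\qq \subset R$ with $\qq \not\supset B$.

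First I would invoke flat base change. Since $\Z.$ is built by tensoring the Koszul cycles $Z_\ell$ with $R[\T]$ (with appropriate shifts), and $R \to R_\qq$ is flat, the localization $(\Z.)_\qq$ is naturally identified with the approximation complex of $\f$ over the local ring $R_\qq$, and in particular $(\HH_i)_\qq = H_i((\Z.)_\qq)$. The hypothesis $B \subset \rad(I)$ now forces $I \not\subset \qq$: otherwise $B \subset \rad(I) \subset \qq$, contradicting our choice of $\qq$. Hence $I R_\qq = R_\qq$, and because $R_\qq$ is local, at least one $f_j$ must be a unit in $R_\qq$.

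Once some $f_j$ is a unit, the Koszul complex $\k.(\f;R_\qq)$ is contractible, so all its positive-degree homologies vanish. In particular $\f$ is trivially a proper sequence in $R_\qq$, and Lemma \ref{LemaboutZ}(3) applied over $R_\qq$ yields $H_i((\Z.)_\qq) = 0$ for every $i > 0$. Consequently $(\HH_i)_\qq = 0$ for every prime $\qq$ outside $V(B)$, which is exactly the $B$-torsion statement.

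The only mildly delicate point is the compatibility of the approximation complex with localization, which is essentially formal from the flatness of $R \to R_\qq$ together with the explicit definition $\Zc_\ell = Z_\ell[\ell\gamma] \otimes_R R[\T]$. The conceptual heart of the argument is the observation that the hypothesis $B \subset \rad(I)$ makes the ideal $I$ trivialize at every prime off $V(B)$, which in turn forces the positive Koszul homologies, and hence all $\HH_i$ with $i>0$, to vanish there.
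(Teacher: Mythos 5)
Your overall strategy is the same as the paper's: localize at a prime $\qq\not\supset B$, observe that $B\subset\rad(I)$ forces $I_\qq=R_\qq$, and conclude that the localized approximation complex has vanishing higher homology, so that $\HH_i$ is supported in $V(B)$. The localization compatibility $(\HH_i)_\qq=H_i((\Z.)_\qq)$ and the reduction to showing $I\not\subset\qq$ are fine. The flaw is the sentence ``In particular $\f$ is trivially a proper sequence in $R_\qq$.'' Being a proper sequence is an \emph{order-sensitive} condition on the \emph{partial} Koszul homologies $H_j(f_0,\dots,f_i;R_\qq)$, and these do not vanish merely because some later $f_\ell$ is a unit: for instance in $R=k[x,y]/(xy)$ the sequence $(x,1)$ fails the condition $a_2\cdot H_1(a_1;R)=0$, since $1\cdot\mathrm{ann}(x)=(y)\neq 0$, even though the full Koszul complex of $(x,1)$ is contractible. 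So contractibility of $\k.(\f;R_\qq)$ does not by itself let you apply Lemma \ref{LemaboutZ}(3) to the sequence $\f$ as given.

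The gap is repairable in either of two standard ways. One is to use that the homology of the approximation complexes depends only on the ideal, not on the chosen generating set (cf.\ \cite{HSV1,Va94}): since $I_\qq=R_\qq$ is generated by the single element $1$, which is a $d$-sequence, Lemma \ref{LemaboutZ}(3)--(4) applied to that generating set gives $H_i((\Z.)_\qq)=0$ for $i>0$. The other is the paper's route, which avoids proper sequences altogether: since some $f_j$ is a unit in $R_\qq$, all Koszul homologies $(H_i)_\qq$ vanish, hence the localized $\M.$-complex is the zero complex and in particular acyclic, and acyclicity of $\M.$ implies acyclicity of $\Z.$ by \cite[Prop.\ 4.3]{BuJo03}. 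Either patch completes your argument; as written, the step quoted above does not.
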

\begin{proof} 
 Let $\pp \in \Spec(R)\setminus V(B)$. In particular $\pp \in \Spec(R)\setminus V(I)$, hence, $(H_i)_\pp=0$. This implies that the complex $\M.$ (cf. \cite{HSV2}) is zero, hence acyclic, after localization at $\pp$. It follows that $(\Z.)_\pp$ is also acyclic \cite[Prop. 4.3]{BuJo03}.
\end{proof}

We now generalize Lemma \ref{ToricZacyclic1} for the case when $V(I)\nsubseteq V(B)$. The condition $B\subset \rad(I)$ can be carried to a cohomological one, by saying $\cd_B(R/I)=0$. Note that $V(I)$ is empty in $\Xc$, if $V(I)\subset V(B)$ in $\Spec(R)$, which is equivalent to $H^i_B(R/I)=0$ for $i>0$. This hypothesis can be relaxed by bounding $\cd_B(R/I)$. 

We will consider  $\cd_B(R/I)\leq 1$ for the sequel in order to have convergence of the horizontal spectral sequence $'E$ at step $2$.

Before getting into the next result, recall that $\Zc_q:=Z_q[q\cdot \gamma]\otimes_\kk \kk[\T]$. Furthermore, if $\grade(B)\geq 2$, it follows that 
\begin{equation}\label{EqSuppZcq}
 \Supp_{\G}(H_B^{k}(\Zc_{q+k}))=\Supp_{\G}(H_B^{k}(Z_{q+k}))-q\cdot \gamma\subset \SSup_B(\gamma)+(1-k)\cdot \gamma.
\end{equation}

Observe, that none of this sets on the right depend on $q$, thus, we define:

\begin{defn}\label{defRegion}
For $\gamma\in \G$, set 
\[
 \Region(\gamma):=\bigcup_{0<k< \min\{m,\cd_B(R)\}} (\SSup_B(\gamma)-k\cdot \gamma)\subset \G.
\]
\end{defn}

\begin{thm}\label{ToricZacyclic2}
Assume that $\grade(B)\geq 2$ and $\cd_B(R/I)\leq 1$. Then, if $\mu \notin \Region(\gamma)$,
\[
 H_B^i(\HH_{j})_{\mu}=0, \quad \mbox{for all }i,j.
\]

\end{thm}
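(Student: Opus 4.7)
My plan is to apply Theorem \ref{ThmRegHGral} to $\C. = \Z.$ and then substitute the support bounds from Corollary \ref{LemMultiLCRGral2} and Remark \ref{LemCyclesGral2} (via equation \eqref{EqSuppZcq}) to show the resulting union lies in $\Region(\gamma)$.

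First, I would verify the hypothesis of Theorem \ref{ThmRegHGral}(1) with $q=0$, namely that $\cd_B(\HH_j)\leq 1$ for all $j>0$. Following the strategy of Lemma \ref{ToricZacyclic1}, for any $\pp\in\Spec(R)\setminus V(I)$ one has $I_\pp=R_\pp$, so the Koszul homologies $(H_j)_\pp$ vanish, the approximation complex $(\M.)_\pp$ is zero, and \cite{BuJo03} gives the acyclicity of $(\Z.)_\pp$, hence $(\HH_j)_\pp=0$. Since $\HH_j$ is finitely generated over $R[\T]$, this pointwise vanishing forces $\HH_j$ to be annihilated by a uniform power $I^n$, so $\HH_j$ is naturally an $R/I^n$-module, giving $\cd_B(\HH_j)\leq \cd_B(R/I^n)\leq \cd_B(R/I)\leq 1$ (the middle inequality coming from an induction on $n$ along the filtration by powers of $I$, each graded piece being an $R/I$-module).

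Second, Theorem \ref{ThmRegHGral}(1) then yields $\Supp_\G(H^i_B(\HH_j))\subset \bigcup_k \Supp_\G(H^{i+k}_B(\Zc_{j+k}))$. The hypothesis $\grade(B)\geq 2$ together with Remark \ref{LemCyclesGral2} kills $H^0_B$ and $H^1_B$ of every $\Zc_q$ (for $q\geq 1$ by the remark, and for $q=0$ because $H^0_B(R[\T])=H^1_B(R[\T])=0$), so only contributions with $p := i+k\geq 2$ survive. For such $p$, equation \eqref{EqSuppZcq} gives $\Supp_\G(H^p_B(\Zc_q))\subset \SSup_B(\gamma)-(p-1)\gamma$, uniformly in $q$. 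Because $\cd_B(\Zc_q)\leq \cd_B(R)$ and the Koszul complex forces $\Zc_q=0$ outside $0\leq q\leq m+1$, the effective range is $2\leq p\leq \min\{m,\cd_B(R)\}$, so setting $k=p-1$ the union lands in $\bigcup_{0<k<\min\{m,\cd_B(R)\}}(\SSup_B(\gamma)-k\gamma)=\Region(\gamma)$.

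The main obstacle is Step~1: upgrading the pointwise acyclicity $(\HH_j)_\pp=0$, which only uses $\pp\not\supset I$, to the global cohomological bound $\cd_B(\HH_j)\leq 1$ requires the finite generation of $\HH_j$ over $R[\T]$ to extract a uniform annihilator $I^n$, together with the radical invariance $\cd_B(R/I^n)=\cd_B(R/I)$ under the weaker assumption $\cd_B(R/I)\leq 1$ (in contrast with Lemma \ref{ToricZacyclic1}, where $B\subset \rad(I)$ gives outright $B$-torsion). A subtler accounting issue arises in Step~2: to ensure the union fits exactly in $\Region(\gamma)$ and not in something strictly larger, one must invoke simultaneously the bound $\cd_B(\Zc_q)\leq \cd_B(R)$ and the Koszul length bound $q\leq m+1$, since either constraint alone would leave extra terms outside $\Region(\gamma)$.
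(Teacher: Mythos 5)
Your overall route is the same as the paper's: compare the two spectral sequences of the double complex $\check{\Cc}_B^\bullet\Z.$ (you do this by quoting Theorem \ref{ThmRegHGral}, whose proof is exactly that comparison), after checking $\cd_B(\HH_j)\leq 1$ for $j>0$, and then feed in the grade hypothesis and \eqref{EqSuppZcq}. Your Step 1 is correct and in fact more careful than the paper's one-line justification (``$\supp(H_p)\subset I$''): localizing at $\pp\not\supset I$ kills the Koszul homologies, hence $(\Z.)_\pp$ is acyclic, and finite generation of $\HH_j$ over $R[\T]$ gives a uniform power $I^n$ annihilating it, so the filtration by powers of $I$ yields $\cd_B(\HH_j)\leq\cd_B(R/I)\leq 1$.

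The gap is exactly at the point you flag as the ``subtler accounting issue'', and your proposed resolution does not work. The vanishing $\Zc_q=0$ for $q>m+1$ bounds the \emph{homological} index of the complex, not the \emph{cohomological} index of the local cohomology appearing in the union $\bigcup_k\Supp_\G(H^{i+k}_B(\Zc_{j+k}))$: for instance $H^p_B(\Zc_0)=H^p_B(R)\otimes_\kk\kk[\T]$ is nonzero for every $p\leq\cd_B(R)$, whether or not $p\leq m$. So the two constraints ``$\cd_B(\Zc_q)\leq\cd_B(R)$'' and ``$q\leq m+1$'' do \emph{not} give ``$p\leq\min\{m,\cd_B(R)\}$'', and your union a priori runs over all $1\leq k\leq\cd_B(R)-1$, which exceeds the range $0<k<\min\{m,\cd_B(R)\}$ defining $\Region(\gamma)$ whenever $m<\cd_B(R)$. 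The correct bookkeeping is diagonal by diagonal: on the anti-diagonal through $(i,j)$ one has $p-c=i-j$ (with $c$ the cycle index), so $p\leq c$ exactly when $i\leq j$; since $R$ is a domain and $I\neq 0$ one has $Z_{m+1}\cong(0:_RI)=0$, hence $c\leq m$ and therefore $p\leq\min\{m,\cd_B(R)\}$ for all the pairs with $i\leq j$. These cover every case where the statement is not already trivial from Step 1, namely $j>0$ with $i\in\{0,1\}$ (for $i\geq2$ the module $H^i_B(\HH_j)$ vanishes outright since $\cd_B(\HH_j)\leq1$) as well as $(i,j)=(0,0)$, i.e.\ precisely what Lemma \ref{CorToricZacyclic} uses later. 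For the remaining pairs $(i,0)$ with $i\geq1$ your argument (and, to be fair, the paper's equally terse final sentence) needs either $m\geq\cd_B(R)$ or the nesting $\SSup_B(\gamma)-(k+1)\gamma\subset\SSup_B(\gamma)-k\gamma$ of Remark \ref{remInclusSupp}, valid for effective $\gamma$; as written, your claim that the union ``lands in $\Region(\gamma)$'' for \emph{all} $i,j$ is not justified.
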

\begin{proof} Consider the two spectral sequences that arise from the double complex $\check{\Cc}_B^\bullet\Z.$. Since $\supp (H_p)\subset I$, the first spectral sequence has at second screen $ _2'E^i_j = H^i_B (\HH_j) $. The condition $\cd_B(R/I)\leq 1$ gives that this spectral sequence stabilizes as $ _2'E^i_j = 0$ unless $j=0$ or, $i=0,1$ and $j>0$. 

The second spectral sequence has at first screen $'' _1 E^i_j= H^i_B(\Zc_j)$. Since $R[\T]$ is $R$-flat, $H^i_B(\Zc_j)=H^i_B(Z_j[j\gamma])\otimes_\kk \kk[\T]$. From and Remark \ref{LemCyclesGral2} the $H^i(\Zc_j)=0$ for $i=0,1$ and $j>0$. Comparing both spectral sequences, we deduce that the vanishing of $H_B^{k}(\Zc_{p+k})_{\mu}$ for all $k$, implies the vanishing of $H_B^{k}(\HH_{p+k})_{\mu}$ for all $k>1$.
Finally, from equation \eqref{EqSuppZcq} we have that if $\mu \notin \Region(\gamma)$ (which do not depend on $p$), then we obtain $H_B^i(\HH_j)_{\mu}=0$.
\end{proof}

\begin{lem}\label{CorToricZacyclic}
Assume $\grade(B)\geq 2$, $\cd_B(R/I)\leq 1$ and $I_\pp$ is almost a local complete intersection for every $\pp\notin V(B)$. Then, for all $\mu\notin \Region(\gamma)$, the complex $(\Z.)_{\mu}$ is acyclic and $H^0_B(\SIR)_{\mu}=0$.
\end{lem}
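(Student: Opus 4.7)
The plan is to reduce this lemma to Theorem \ref{ToricZacyclic2}, whose conclusion $H^i_B(\HH_j)_\mu=0$ for all $i,j$ and $\mu\notin\Region(\gamma)$ is already available under the two hypotheses $\grade(B)\geq 2$ and $\cd_B(R/I)\leq 1$. The extra ingredient that this lemma requires over the theorem is the almost local complete intersection hypothesis, which will be used solely to control the support of $\HH_i$ for $i>0$.

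First I would dispatch the claim $H^0_B(\SIR)_\mu=0$. By Lemma \ref{LemaboutZ}(1) one has $\HH_0=\SIR$, so the $i=j=0$ case of Theorem \ref{ToricZacyclic2} gives $H^0_B(\SIR)_\mu=H^0_B(\HH_0)_\mu=0$ for every $\mu\notin\Region(\gamma)$. No further work is needed here.

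The acyclicity statement is the substantive part. The plan is to show that $\HH_i$ is $B$-torsion for all $i>0$, so that $\HH_i=H^0_B(\HH_i)$, and then invoke Theorem \ref{ToricZacyclic2} again at the bottom cohomological degree to kill this in the range $\mu\notin\Region(\gamma)$. To see the $B$-torsion property, let $\pp\in\Spec(R)\setminus V(B)$. The hypothesis says $I_\pp$ is almost a local complete intersection; by the remark following the definition of proper sequence in the paper, such an ideal is generated by a proper sequence. The approximation complex construction commutes with localization (since $Z_i$, $B_i$, $H_i$ and the Koszul shifts all do), so $(\Z.)_\pp$ is the approximation complex of $I_\pp$ over $R_\pp$. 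By Lemma \ref{LemaboutZ}(3), $(\HH_i)_\pp=H_i((\Z.)_\pp)=0$ for all $i>0$. Since this holds at every $\pp\notin V(B)$, we get $\Supp(\HH_i)\subseteq V(B)$, i.e.\ $\HH_i=H^0_B(\HH_i)$ for $i>0$. Combining with Theorem \ref{ToricZacyclic2} we conclude $(\HH_i)_\mu=H^0_B(\HH_i)_\mu=0$ for every $\mu\notin\Region(\gamma)$ and every $i>0$, i.e.\ $(\Z.)_\mu$ is acyclic.

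The step I expect to be the main obstacle, or rather the one that deserves the most attention, is the passage from the pointwise almost local complete intersection hypothesis to global $B$-torsion of the Koszul-type homologies $\HH_i$. One must verify that the formation of $\Z.$ behaves well under localization and that ``almost local complete intersection'' is exactly the right local condition to trigger Lemma \ref{LemaboutZ}(3); once this bridge is in place, everything else is a direct application of what was already proved.
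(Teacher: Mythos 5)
Your proposal is correct and follows essentially the same route as the paper: the almost local complete intersection hypothesis is used (via proper sequences and Lemma \ref{LemaboutZ}(3), after localizing at $\pp\notin V(B)$) to show $\HH_i$ is $B$-torsion for $i>0$, and then Theorem \ref{ToricZacyclic2} kills $H^0_B(\HH_i)_\mu=(\HH_i)_\mu$ and $H^0_B(\HH_0)_\mu=H^0_B(\SIR)_\mu$ for $\mu\notin\Region(\gamma)$. Your write-up merely makes explicit the localization step that the paper leaves implicit.
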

\begin{proof}
 Since $I_\pp$ is almost a local complete intersection for every $\pp\notin V(B)$, $\Z.$ is acyclic off $V(B)$. Hence, $\HH_q$ is $B$-torsion for all positive $q$. Since $\HH_q$ is $B$-torsion, $H^k_B(\HH_q)=0$ for $k>0$ and $H^0_B(\HH_q)=\HH_q$. From Theorem \ref{ToricZacyclic2} we have that $(\HH_q)_{\mu}=0$, and $H_B^0(\HH_0)_{\mu}=0$.
\end{proof}

\begin{rem}
 Observe that Theorem \ref{ToricZacyclic2} and Lemma \ref{CorToricZacyclic} provide acyclicity statements of some graded strands of the $\Zc$-complex, equivalently, acyclicity as a complex of sheaves over $\Tc$. In the multigraded case, acyclicity in ``good'' degree does not imply acyclicity of the complex, differently to the situation in the $\ZZ$-graded case (cf.\ \cite{BuJo03}).
\end{rem}

%%%%%%%%%%%%%%%%%%%%%%%%%%%%%%%%%%%%%%%%%%%%%%%%%%%%%%%%%%%%%%%%%%%%%%%%%%%%%%%%
%%%%%%%%    %%%%%%%%    %%%%%%%%    %%%%%%%%    %%%%%%%%    %%%%%%%%    %%%%%%%%
%%%%%%%%%%%%%%%%%%%%%%%%%%%%%%%%%%%%%%%%%%%%%%%%%%%%%%%%%%%%%%%%%%%%%%%%%%%%%%%%

\medskip

\section{Implicitization of toric hypersurfaces in $\PP^n$}\label{ToricImplicitization}
In this section, we present and discuss the formal structure of the closed image of rational maps defined over a toric variety and how to compute an implicit equation of it. This subject has been investigated in several articles with many different approaches. The problem of computing the implicit equations defining the closed image of a rational map is an open research area with several applications.

\medskip

Let $\Xc$ be a non-degenerate toric variety over a field $\kk$, $\Delta$ be its fan in the lattice $N\cong \ZZ^d$ corresponding to $\Xc$, and write $\Delta(i)$ for the set of $i$-dimensional cones in $\Delta$ as before. Denote by $R$ the Cox ring of $\Xc$ (cf.\ Sec.\ \ref{CoxRing}).

Henceforward, we will focus on the study of the elimination theory \`a la Jouanolou-Bus\'e-Chardin (see for example \cite{BuJo03, BC05, BCJ06}). This aim brings us to review some basic definitions and properties.

Assume we have a rational map $\phi: \Xc \dto \PP^m$, defined by $m+1$ homogeneous elements $\f:=f_0,\hdots,f_m\in R$ of fixed degree $\gamma\in\D$. Precisely, any cone $\sigma\in \Delta$ defines an open affine set $U_\sigma$ (cf. \cite{Cox95}), and two elements $f_i, f_j$ define a rational function $f_i/f_j$ on some affine open set $U_\sigma$, and this $\sigma$ can be determine from the monomials appearing in $f_j$. In particular, if $\Xc$ is a multiprojective space, then $f_i$ stands for a multihomogeneous polynomial of multidegree $\gamma\in \ZZ_{\geq 0} \times \cdots \times \ZZ_{\geq 0}$.

We recall that for any $\D$-homogeneous ideal $J$, $\Proj_{\Xc}(R/J)$ simply stands for the gluing of the affine scheme $\Spec((R/J)_\sigma)$ on every affine chart $\Spec(R_\sigma)$, to $\Xc$. It can be similarly done to define from $\D\times \ZZ$-homogeneous ideals of $R\otimes_\kk \kk[\T]$, subschemes of $\Xc \times_\kk \PP^m$, and this projectivization functor will be denoted $\Proj_{\Xc \times \PP^m}(-)$. The graded-ungraded scheme construction will be denoted by $\Proj_{\Xc \times \AA^{m+1}}(-)$. For a deep examination on this subject, we refer the reader to \cite{Fu93}, and \cite{Cox95}. 

\begin{defn}\label{defToricSetting}
 Set $I:=(f_0,\hdots,f_n)$ ideal of $R$. Define $\Sc:=\Proj_{\Xc}(R/I)$ and $\Sc^{\textnormal{red}}:=\Proj_{\Xc}(R/\rad(I))$, the base locus of $\phi$. denote by $\Omega:=\Xc\setminus \Sc$, the domain of definition of $\phi$. 

Let $\Gamma_0$ denote the graph of $\phi$ over $\Omega$, and $\Gamma:=\overline{\Gamma_0}$ its closure in $\Xc \times \PP^m$. Scheme-theoretically we have $\Gamma=\Proj_{\Xc \times \PP^m}(\RIR)$, where $\RIR:=\bigoplus_{l\geq 0} (It)^l\subset R[t]$. 
\end{defn}

Recall that the two surjections, $R[\T]\twoheadrightarrow \SIR$ and $\beta:\SIR\twoheadrightarrow \RIR$, established on Diagram \ref{diagReesSym}, correspond to a chain of embedding $\Gamma \subset \Upsilon \subset \Xc \times \PP^m$, where $\Upsilon = \Proj_{\Xc \times \PP^m}(\SIR)$.

Assume the ideal $I$ is of linear type off $V(B)$, that is, for every prime $\qq\not\supset B$, $(\SIR)_\qq=(\RIR)_\qq$. Since the functors Sym and Rees commute with localization, $\Proj_{\Xc \times \PP^m}(\SIR)=\Proj_{\Xc \times \PP^m}(\RIR)$, that is $\Upsilon=\Gamma$ in $ \Xc \times \PP^m$. Moreover, $\Proj_{\Xc\times \AA^{m+1}}(\SIR)$ and $\Proj_{\Xc\times \AA^{m+1}}(\RIR)$ coincide in $ \Xc\times \AA^{m+1}$. Recall that this in general does not imply that $\SIR$ and $\RIR$ coincide, in fact this is almost never true: as $\RIR$ is the closure of the graph of $\phi$ which is irreducible, it is an integral domain, hence, torsion free; on the other hand, $\SIR$ is almost never torsion free.

\begin{rem}\label{gradeBinS}
 By definition of $B$, it can be assumed without loss of generality that $\grade (B)\geq 2$.
\end{rem}

Hereafter, we will assume that $\grade (B)\geq 2$.

\begin{lem}\label{lemDimyCDToric}
 If $\dim (V(I)) \leq 0$ in $\Xc$, then $\cd_B(R/I)\leq 1$.
\end{lem}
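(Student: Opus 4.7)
The plan is to translate the cohomological dimension statement about $B$-local cohomology into a vanishing statement for sheaf cohomology on $\Xc$ using Mustata's isomorphism \eqref{equLC-SCforS}, and then invoke Grothendieck's vanishing theorem via the hypothesis on $\dim(V(I))$.

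More precisely, I would first recall that by \eqref{equLC-SCforS} applied to $M=R/I$, for every $i\geq 1$ one has
\[
H^{i+1}_B(R/I)\cong H^i_{\ast}(\Xc,(R/I)^\sim)=\bigoplus_{\delta\in \D}H^i(\Xc,(R/I)^\sim[\delta]).
\]
Thus in order to establish $\cd_B(R/I)\leq 1$, i.e.\ $H^j_B(R/I)=0$ for all $j\geq 2$, it is enough to show that $H^i(\Xc,(R/I)^\sim[\delta])=0$ for every $i\geq 1$ and every $\delta\in \D$.

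Next, I would observe that $(R/I)^\sim$ is a coherent $\OO_\Xc$-module whose scheme-theoretic support is exactly $\Sc=\Proj_\Xc(R/I)=V(I)\subset \Xc$. Twisting by the divisorial sheaf $\OO_\Xc(\delta)$ does not enlarge the support (it is defined via localization at the graded pieces, so its support on the toric variety agrees with that of $(R/I)^\sim$), so each $(R/I)^\sim[\delta]$ remains coherent and set-theoretically supported on $V(I)$, a subscheme of $\Xc$ of dimension at most $0$ by hypothesis.

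Finally I would apply Grothendieck's vanishing theorem on the noetherian scheme $\Xc$: for any coherent sheaf $\FFF$, the cohomology $H^i(\Xc,\FFF)$ vanishes for $i>\dim(\supp\FFF)$. With $\FFF=(R/I)^\sim[\delta]$ and $\dim\supp\FFF\leq 0$, we get $H^i(\Xc,(R/I)^\sim[\delta])=0$ for every $i\geq 1$ and every $\delta$, which combined with the displayed isomorphism yields $H^j_B(R/I)=0$ for all $j\geq 2$, as wanted. The one delicate point is that on a general (non-smooth) toric variety the sheaves $\OO_\Xc(\delta)$ need not be locally free, so one cannot simply invoke the projection formula; but since coherence and set-theoretic support of $(R/I)^\sim[\delta]$ can be checked on the affine charts $U_\sigma$ directly from the construction of Cox, this does not affect the argument.
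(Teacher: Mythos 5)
Your proposal is correct and follows essentially the same route as the paper: both apply the isomorphism \eqref{equLC-SCforS} to $M=R/I$ to identify $H^{j}_B(R/I)$ for $j\geq 2$ with sheaf cohomology in positive degrees of a coherent sheaf supported on $V(I)$, and then conclude by dimension-based (Grothendieck) vanishing since $\dim V(I)\leq 0$. The paper phrases the last step as $H^i(\Xc,(R/I)^\sim(\gamma))=H^i(V(I),\OO_{V(I)}(\gamma))$, which is just the closed-support formulation of the vanishing you invoke.
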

\begin{proof}
For any finitely generated $R$-module $M$ and all $i>0$, from Equation \eqref{equLC-SCforS} $H^i_{\ast}(\Xc,M^\sim)\cong H^{i+1}_B(M)$.
Applying this to $M=R/I$, for all $\gamma\in \D$ we get that
\[
 H^i(\Xc, (R/I)^\sim (\gamma))=H^i(V(I), \OO_{V(I)}(\gamma)),
\]
 that vanishes for $i>0$, since $\dim V(I) \leq 0$.
\end{proof}

\begin{lem}\label{lenDegZ}
Let $\Tc$ be a toric variety with Cox ring $R$, graded by the group $\G$, with irrelevant ideal $B$. Let $J\subset R$ be an homogeneous ideal of $R$. Write $Z=\Proj_\Xc(R/J)$, and assume that $\dim\ Z=0$ in $\Tc$, then, if $\mu\in \G$ is such that $H^0_B(R/J)_\mu=H^1_B(R/J)_\mu=0$ then $\dim (R/J)_\mu=\deg(Z)$.
\end{lem}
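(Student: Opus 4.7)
The plan is to pass from the algebraic side (the graded piece $(R/J)_\mu$) to the geometric side (global sections on $Z$) by means of the Cox four-term exact sequence, and then use that on a zero-dimensional scheme any line bundle has the same number of sections, namely the length of the structure sheaf.

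First, I would apply the exact sequence recalled just after equation \eqref{equLC-SCforS} to the finitely generated $\D$-graded $R$-module $M = R/J$, giving
\[
0 \lto H^0_B(R/J) \lto R/J \lto H^0_{\ast}(\Xc,(R/J)^\sim) \lto H^1_B(R/J) \lto 0.
\]
Taking the $\mu$-th graded component and using the hypothesis $H^0_B(R/J)_\mu = H^1_B(R/J)_\mu = 0$, this yields an isomorphism of $\kk$-vector spaces
\[
(R/J)_\mu \;\cong\; H^0(\Xc, (R/J)^\sim(\mu)).
\]
Since $J$ defines $Z$ scheme-theoretically in $\Xc$, the sheaf $(R/J)^\sim$ is the pushforward of $\OO_Z$ along the closed immersion $Z \hookrightarrow \Xc$, so the right-hand side equals $H^0(Z, \OO_Z(\mu))$, where $\OO_Z(\mu)$ denotes the restriction to $Z$ of the reflexive sheaf on $\Xc$ associated with $\mu \in \D$.

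Next, I would use that $\dim Z = 0$ in $\Tc$. Being a zero-dimensional closed subscheme, $Z$ is a finite disjoint union of spectra of Artinian local $\kk$-algebras, say $Z = \coprod_i \Spec(A_i)$. On such a scheme every coherent sheaf is acyclic and its global sections recover the direct sum of its stalks; moreover the restriction of any invertible sheaf to $\Spec(A_i)$ is free of rank one over $A_i$. Therefore
\[
\dim_\kk H^0(Z, \OO_Z(\mu)) \;=\; \sum_i \length_{A_i}(A_i) \;=\; \length(\OO_Z) \;=\; \deg(Z),
\]
the last equality being the definition of the degree of a zero-dimensional scheme. Combining this with the isomorphism above gives $\dim_\kk (R/J)_\mu = \deg(Z)$.

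The only substantive point to verify carefully is that twisting by $\mu$ does not change $\dim H^0(Z, \OO_Z(\mu))$; the main obstacle is therefore checking that the restriction of the Cox-associated sheaf $\OO_\Xc(\mu)$ to a zero-dimensional $Z$ is locally free of rank one (i.e.\ that one may work with invertible sheaves even when $\Xc$ is not smooth and $\OO_\Xc(\mu)$ is only reflexive). This holds because $Z$ is zero-dimensional and contained in the smooth locus of $\Xc$ up to the standard reduction, or more directly because, on each Artinian local component $\Spec(A_i)$, a reflexive rank-one sheaf is automatically free.
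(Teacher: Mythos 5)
Your proof follows essentially the same route as the paper's: take the four-term exact sequence following \eqref{equLC-SCforS} with $M=R/J$ in degree $\mu$, use the vanishing hypotheses to identify $(R/J)_\mu$ with $H^0(\Tc,(R/J)^\sim(\mu))=H^0(Z,\OO_Z(\mu))$, and then use that $Z$ is zero-dimensional (a disjoint union of finitely many Artinian components) so that this section count is independent of the twist $\mu$ and equals $\deg(Z)$. Your closing justification that a reflexive rank-one module over an Artinian local ring is automatically free is not quite right as stated (the residue field of $k[x]/(x^2)$ is reflexive but not free), but the paper handles this same point by simply asserting $H^0_{\ast}(Z_i,\OO_{Z_i}(\mu))=k^{\mult_\Tc(Z_i)}$, so your argument matches the paper's at the same level of detail.
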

\begin{proof}
 Consider the exact sequence \eqref{equLC-SCforS} for $M=R/J$ in degree $\mu\in \G$ 
\[
 0\lto H^0_B(R/J)_\mu\lto (R/J)_\mu \lto H^0_{\ast}(\Tc,(R/J)^\sim(\mu))\lto H^{1}_B(R/J)_\mu\lto 0.
\]
 Since $Z$ is zero-dimensional, and $\Tc$ compact, assume $Z=Z_1\sqcup\cdots\sqcup Z_\ell$. We have that
$H^0_{\ast}(\Tc,(R/J)^\sim(\mu))\cong H^0_{\ast}(Z,\OO_Z(\mu))=\underset{1\leq i\leq \ell}{\bigoplus}H^0_{\ast}(Z_i,\OO_{Z_i}(\mu))$. Thus, since $H^0_{\ast}(Z_i,\OO_{Z_i}(\mu))=k^{\mult_\Tc(Z_i)}$, we conclude that 
\[
 \dim \paren{(R/J)_\mu}=\sum_{1\leq i\leq \ell}\mult_\Tc(Z_i)=\deg(Z).\qedhere
\]
\end{proof}

\begin{thm}\label{CorToricImplicit}
 Let $\Xc$ be a ($n-1$)-dimensional non-degenerate toric variety over a field $\kk$, and $R$ its Cox ring. Let $\phi: \Xc \dto \PP^{n}$ be a rational map, defined by $d+1$ homogeneous elements $f_0,\hdots,f_{d}\in R$ of fixed degree $\gamma\in\D$. Denote $I=\paren{f_0,\hdots,f_{n}}$. If $\dim(V(I))\leq 0$ in $\Xc$ and $V(I)$ is almost a local complete intersection off $V(B)$, then
\[
 \det((\Z.)_\mu)=H^{\deg(\phi)}\cdot G \in \kk[\T],
\]
for all $\mu\notin \Region(\gamma)$, where $H$ stands for the irreducible implicit equation of the image of $\phi$, and $G$ is relatively prime polynomial in $\kk[\T]$.
\end{thm}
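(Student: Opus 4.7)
The plan has three main stages: verify that the hypotheses of Lemma \ref{CorToricZacyclic} hold so as to obtain acyclicity, interpret the resulting complex as a finite free $\kk[\T]$-resolution of $(\SIR)_\mu\cong(\RIR)_\mu$, and finally compute the determinant as a MacRae invariant by localizing at the height-one primes of $\kk[\T]$.

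First I would check that Lemma \ref{CorToricZacyclic} applies: $\grade(B)\geq 2$ by Remark \ref{gradeBinS}, $\cd_B(R/I)\leq 1$ by Lemma \ref{lemDimyCDToric} (using $\dim V(I)\leq 0$ in $\Xc$), and the almost local complete intersection hypothesis off $V(B)$ is part of the assumptions. So for every $\mu\notin \Region(\gamma)$ the complex $(\Z.)_\mu$ is acyclic and $H^0_B(\SIR)_\mu=0$. Next, since $(\Zc_\ell)_\mu=(Z_\ell)_{\mu+\ell\gamma}\otimes_\kk \kk[\T]$ is a finitely generated free $\kk[\T]$-module for every $\ell$, the complex $(\Z.)_\mu$ is a finite free resolution of $(\SIR)_\mu$ over $\kk[\T]$. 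Combining the identification $\SIR/H^0_B(\SIR)\cong\RIR$ (from the linear-type property off $V(B)$) with $H^0_B(\SIR)_\mu=0$ yields $(\SIR)_\mu\cong(\RIR)_\mu$ as $\kk[\T]$-modules. By the annihilator lemma appearing just before Remark \ref{gradeBinS}, $\ann_{\kk[\T]}((\RIR)_\mu)=\pp\cap\kk[\T]=(H)$, so $(\SIR)_\mu$ is torsion over $\kk[\T]$ with support contained in $V(H)\subset\PP^n$.

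With $(\SIR)_\mu$ torsion and finitely presented by a finite free resolution, $\det((\Z.)_\mu)$ is a well-defined nonzero element of $\kk[\T]$ generating the MacRae invariant (initial Fitting ideal) of $(\SIR)_\mu$. The classical MacRae factorization gives
\[
\det((\Z.)_\mu)\doteq \prod_{\pp}\pp^{\length_{\kk[\T]_\pp}((\SIR)_\mu\otimes_{\kk[\T]}\kk[\T]_\pp)},
\]
with $\pp$ running over height-one primes of $\kk[\T]$. Irreducibility of the graph $\Gamma=\Proj_{\Xc\times\PP^n}(\RIR)$ together with the fact that the second projection $\pi_2:\Gamma\to V(H)=\im(\phi)$ is finite of degree $\deg(\phi)$ (since $\Gamma\to\Xc$ is birational and $\phi$ is generically $\deg(\phi)$-to-one onto its image) yields, via a graded projection-formula argument, $\length_{\kk[\T]_{(H)}}((\SIR)_\mu\otimes\kk[\T]_{(H)})=\deg(\phi)$. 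Hence the $(H)$-factor of $\det((\Z.)_\mu)$ is exactly $H^{\deg(\phi)}$, and any remaining contributions from height-one primes different from $(H)$ assemble into a polynomial $G\in\kk[\T]$ coprime to $H$, whose geometric meaning is given by the components of $\Proj_{\Xc\times\PP^n}(\SIR)$ lying over $\PP^n$ outside $V(H)$ (typically attributable to the base locus $V(I)$).

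The principal technical obstacle I expect is the length computation at $\pp=(H)$: translating the geometric degree $\deg(\phi)$ of $\pi_2:\Gamma\to V(H)$ into the algebraic length of the localized graded strand $(\RIR)_\mu\otimes\kk[\T]_{(H)}$. I would handle this via a graded flat-base-change together with the projection formula applied along $\pi_2$, reducing to the standard fact that the generic fiber of a finite map of degree $d$ has length $d$ over the residue field at the generic point of the image; the hypothesis $\mu\notin\Region(\gamma)$ is what ensures that this fiber-level computation agrees with the algebraic length on the graded piece indexed by $\mu$.
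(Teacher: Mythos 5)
Your overall skeleton matches the paper's proof (acyclicity of $(\Z.)_\mu$ via Lemma \ref{CorToricZacyclic}, the Knudsen--Mumford/MacRae identification of $\det((\Z.)_\mu)$ with the divisor of $H_0((\Z.)_\mu)=(\SIR)_\mu$, the length computation at $(H)$ giving $\deg(\phi)$, and the residual primes giving $G$), but there is a genuine error in the middle step. You invoke ``the linear-type property off $V(B)$'' to conclude $\SIR/H^0_B(\SIR)\cong\RIR$ and hence $(\SIR)_\mu\cong(\RIR)_\mu$ globally. That property is \emph{not} among the hypotheses of the theorem: the theorem only assumes $V(I)$ is almost a local complete intersection off $V(B)$, and an almost complete intersection need not be of linear type (e.g.\ $(x,y)^2\subset\kk[x,y]$), so the kernel of $\SIR\twoheadrightarrow\RIR$ can be strictly larger than $H^0_B(\SIR)$ even off $V(B)$. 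Worse, your conclusion that $(\SIR)_\mu$ has support contained in $V(H)$ would force every codimension-one prime in its support to equal $(H)$, i.e.\ it would make $G$ a nonzero constant — contradicting both the statement you are proving and Lemma \ref{remExtraFactor}, where $G$ is precisely built from the components of $\Upsilon=\Proj_{\Xc\times\PP^n}(\SIR)$ lying over the non-lci base points, on which $\SIR$ and $\RIR$ differ. Your later sentence about ``remaining contributions from height-one primes different from $(H)$'' is the correct picture, but it is inconsistent with the support claim you just made.

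The fix is what the paper does: identify $\SIR/H^0_B(\SIR)$ with $\RIR$ only \emph{after localizing at} $\pp=(H)$, using that $\Upsilon$ and $\Gamma$ coincide outside $\Sc\times\PP^n$ and that the extra components of $\Upsilon$ (fibers over the finitely many base points) map under $\pi_2$ into proper closed subsets of $\PP^n$ not containing $V(H)$; then $\length_{\kk[\T]_{(H)}}((\SIR)_\mu)_{(H)}=\length_{\kk[\T]_{(H)}}((\RIR)_\mu)_{(H)}$, which is computed to be $\deg(\phi)$ via $\deg(\pi_1)=1$ and Lemma \ref{lenDegZ} (this is the second place where $\mu\notin\Region(\gamma)$ is used — your sketch of this step via a fiber/projection-formula argument is essentially the paper's, though $\pi_2|_\Gamma$ is only generically finite, not finite). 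Note also that the torsionness of $(\SIR)_\mu$ over $\kk[\T]$, which you need for the MacRae invariant to be a nonzero polynomial, cannot be deduced from the (false) global isomorphism with $(\RIR)_\mu$; it requires the separate observation that no component of $\Upsilon$ dominates $\PP^n$, which follows from the alci hypothesis since at each base point at least one syzygy has a unit coefficient, so the fiber of $\Upsilon$ there is a proper linear subspace of $\PP^n$.
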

\begin{proof}
 This result follows in the standard way, similar to the cases of implicitization problems in other contexts.

Recall that $\Gamma$ is the closure of the graph of $\phi$, hence, defined over $\Omega$. The bihomogeneous structure in $R\otimes_\kk \kk[\T]$ gives rise to two natural scheme morphisms $\pi_1$ and $\pi_2$: $\Xc \nto{\pi_1}{\ot} \Xc \times_{\kk} \PP^n \nto{\pi_2}{\to} \PP^n $. It follows directly that $\pi_2=\pi_1\circ\phi$ over the graph of $\phi$, $\pi_1^{-1}(\Omega)$.

From Theorem \ref{CorToricImplicit}, the complex of $\OO_{\PP^n}$-modules $(\Z.)^\sim$ is acyclic over $\Xc\times_\kk \PP^{n}$. We can easilly verify that this complex has support in $\Upsilon$, hence, $H^0(\Xc\times_\kk \PP^{n}, (\Z.)^\sim)=H^0(\Upsilon, (\Z.)^\sim)=\SIR$. Naturally, the factor $G$ defines a divisor in $\PP^{n}$ with support on $\pi_2(\Upsilon\setminus \Gamma)$, and $\Upsilon$ and $\Gamma$ coincide outside $\Sc\times \PP^{n}$. 

Following \cite{KMun}, due to the choice of $\mu\notin \Region(\gamma)$, one has:
\[
 [\det((\Z.)_\nu)]=\div_{\kk[\X]}(H_0(\Z.)_\mu)=\div_{\kk[\X]}(\SIR_\mu) =\sum_{\mbox{\scriptsize $\begin{array}{c}\qq \textnormal{ prime, }\\ \codim_{\kk[\X]}(\qq)=1 \end{array}$}}\length_{\kk[\X]_{\qq}} ((\SIR_\mu)_\qq)[\qq].
\]
Thus, for all $\mu\notin \Region(\gamma)$, we obtain
\[
[\det((\Z.)_\mu)]= \length_{\kk[\X]_{(H)}} ((\SIR_\mu)_{(H)})[(H)] +\hspace{-0.4cm}\underset{\mbox{\scriptsize $\begin{array}{c}\qq \textnormal{ prime,}\\ V(\qq) \not\subset V(H)\\ \codim_{\kk[\X]}(\qq)=1 \end{array}$}}{\sum} \hspace{-0.4cm}\length_{\kk[\X]_{\qq}} ((\SIR_\mu)_{\qq})[\qq]. 
\]
It follows that the first summand is the divisor associated to the irreducible implicit equation $H$, and the second one, the divisor associated to $G$. We write $\pp:=(H)$ and $\kkk(\pp):=R[\X]_{(\pp)}/(\pp)R[\X]_{\pp}$. We will show that $\length_{\kk[\X]_{\pp}} ((\SIR_\mu)_{\pp})= \deg(\phi)$. Observe that for every $\mu\in \G$ we have $(\SIR/H^0_B(\SIR)_\mu)_{\pp}=(\RIR_\mu)_{\pp}$. 
For every $\mu\in \G$ we have  
\[
 \length_{\kk[\X]_{\pp}} ((\RIR_\mu)_{\pp})=\dim_{\kkk(\pp)} \paren{\paren{\RIR\otimes_{R[\X]_{\pp}}\kkk(\pp)}_\mu}.
\]

Since $\deg(\pi_1)=1$, we have that $\dim_{\kkk(\pp)} \paren{\paren{\RIR\otimes_{R[\X]_{\pp}}\kkk(\pp)}_\mu}= \dim_{\kk} \paren{\paren{R/(\pi_1)_\ast(\pp R[\X]_\pp)}_\mu}$. The results follow from Lemma \ref{lenDegZ} taking $J=(\pi_1)_\ast(\pp R[\X]_\pp)$ and $\mu\notin \Region(\gamma)$.
 \end{proof}

We next give a detailed description of the extra factor $G$, as given in \cite[Prop. 5]{BCJ06}.

\begin{lem}\label{remExtraFactor}
 Let $\Xc$, $R$, $\phi: \Xc \dto \PP^{n}$, $H$ and $G$ be as in Theorem \ref{CorToricImplicit}. If $\kk$ is algebraically closed, then $G$ can be written as 
\[
 G=\prod_{\mbox{\scriptsize $\begin{array}{c}\qq \textnormal{ prime, } V(\qq) \not\subset V(H)\\ \codim_{\kk[\X]}(\qq)=1 \end{array}$}} L_\qq^{e_\qq-l_\qq}.
\]
in $\kk[\T]$, where $e_\qq$ stands for the Hilbert-Samuel multiplicity of $\SIR$ at $\qq$, and the number $l_{\qq}$ denotes $\length_{\kk[\X]_{\qq}}((\SIR_\mu)_{\qq})$.
\end{lem}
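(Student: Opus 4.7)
My plan is to refine the divisor decomposition obtained in the proof of Theorem \ref{CorToricImplicit}. That argument yielded
\[
[\det((\Z.)_\mu)] = \deg(\phi)\cdot[(H)] \;+\; \sum_{\qq}\length_{\kk[\T]_\qq}\!\big((\SIR_\mu)_\qq\big)\,[\qq],
\]
with $\qq$ ranging over the height-one primes of $\kk[\T]$ satisfying $V(\qq)\not\subset V(H)$. Since $\kk$ is algebraically closed, each such $\qq$ is principal, say $\qq=(L_\qq)$ for an irreducible $L_\qq\in\kk[\T]$. Reading off $G=\det((\Z.)_\mu)/H^{\deg(\phi)}$ thus realizes $G$ as a product of the $L_\qq$, and what remains is to identify the exponents with the stated difference $e_\qq-l_\qq$.

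The central tool will be the associativity formula for Hilbert--Samuel multiplicities applied to the equidimensional ring $\SIR$ at the height-one prime $\qq$:
\[
e_\qq(\SIR)\;=\;\sum_{\pp\in\mathrm{Min}(\SIR),\ \pp\subset\qq}\length_{\SIR_\pp}(\SIR_\pp)\cdot e_\qq(\SIR/\pp).
\]
The minimal prime corresponding to $\RIR$ projects to $V(H)$, so its multiplicity $e_\qq(\SIR/\pp)$ vanishes once $V(\qq)\not\subset V(H)$. The remaining minimal primes are precisely those carried by the $B$-torsion $H^0_B(\SIR)$, i.e.\ the ``spurious'' components of $\Upsilon$ producing the extra factor $G$. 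Combined with the additivity of length along the short exact sequence $0\to H^0_B(\SIR)\to\SIR\to\RIR\to 0$, localized at $\qq$ and taken in degree $\mu$, this rewrites the divisor coefficient at $\qq$ in the form $e_\qq-l_\qq$ asserted in the statement.

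The hardest step will be matching the local length of the single graded strand $(\SIR_\mu)_\qq$ with the Hilbert--Samuel multiplicity of the local ring $\SIR_\qq$: the first is a graded invariant in a particular degree, whereas the second is a purely local numerical invariant. This transition is exactly what the hypothesis $\mu\notin\Region(\gamma)$ is designed to control: by Lemma \ref{CorToricZacyclic} the complex $(\Z.)_\mu$ is acyclic and the $B$-torsion $H^0_B(\SIR)_\mu$ vanishes, while Lemma \ref{lenDegZ} matches the dimensions of graded pieces with the scheme-theoretic degrees of the relevant zero-dimensional loci. With these inputs in place, the argument proceeds along the lines of \cite[Prop.\ 5]{BCJ06}, specialized to the multigraded toric framework provided by the Cox ring of $\Xc$.
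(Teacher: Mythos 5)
Your proposal does not close the actual content of this lemma, which is the identification of the exponent of $L_\qq$ as the difference $e_\qq-l_\qq$. From the divisor decomposition in the proof of Theorem \ref{CorToricImplicit} one only gets that the exponent of $L_\qq$ in $\det((\Z.)_\mu)$ is $\length_{\kk[\T]_\qq}((\SIR_\mu)_\qq)$, and since $\ann_{\kk[\T]}(\RIR_\mu)=(H)$, this is the length of the $\mu$-strand of the torsion $H^0_B(\SIR)$ localized at $\qq$. Turning that length into ``multiplicity minus length'' is a genuinely local computation of the torsion of the symmetric algebra of an almost complete intersection ideal at the non-lci points of the base locus $\Sc$; it is exactly the content of \cite[Lemma 6]{BCJ06}, and neither your associativity formula nor length additivity along $0\to H^0_B(\SIR)\to\SIR\to\RIR\to 0$ produces it. Moreover, the associativity formula as you wrote it is not well posed: $\qq$ is a height-one prime of $\kk[\T]$ while the minimal primes $\pp$ you sum over live in $\SIR$, so ``$\pp\subset\qq$'' and ``$e_\qq(\SIR/\pp)$'' do not typecheck, and the reason the Rees component does not contribute at $\qq$ is simply that $(\RIR_\mu)_\qq=0$ for $V(\qq)\not\subset V(H)$, not a vanishing of a multiplicity. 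Finally, the ``hardest step'' you single out (matching a graded strand with a local invariant via acyclicity and Lemma \ref{lenDegZ}) is the mechanism behind the exponent $\deg(\phi)$ at the prime $(H)$ in the theorem, not behind the extra factor $G$.

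The paper's proof is short precisely because it reduces everything to \cite[Prop.~5]{BCJ06} and records the one hypothesis that must be verified to transplant the local lemma to the toric setting: $R$ is Cohen--Macaulay and the local rings $\OO_{\Tc,x}=R_x$ at the points $x\in\Sc$ are Cohen--Macaulay, so \cite[Lemma 6]{BCJ06} applies verbatim. This Cohen--Macaulay verification is absent from your proposal, and without it (or an independent proof of the local torsion-length formula at almost-complete-intersection, non-lci base points) the asserted exponent $e_\qq-l_\qq$ is not justified. Ending with ``the argument proceeds along the lines of \cite[Prop.~5]{BCJ06}'' matches the paper's strategy in spirit, but the specific intermediate steps you propose would not carry the argument there.
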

\begin{proof}
 The proof follows the same lines of that of \cite[Prop. 5]{BCJ06}. Since $R$ is a Cohen-Macaulay ring and for every $x\in \Sc$, the local ring $\OO_{\Tc,x}=R_x$ is Cohen-Macaulay, \cite[Lemma 6]{BCJ06} can be applied verbatim.
\end{proof}

The main idea behind Lemma \ref{remExtraFactor} is that only non-complete intersections points in $\Sc$ yield the existence of extra factors as in \cite{BCJ06}, \cite{BDD08} and \cite{Bot09}. Indeed, if $I$ is locally a complete intersection at $\qq\in \Sc$, then $I_\qq$ is of linear type, hence, $(\SIR)_\qq$ and $(\RIR)_\qq$ coincide. Thus, the schemes $\Proj_{\Xc \times \PP^m}(\SIR)$ and $\Proj_{\Xc \times \PP^m}(\RIR)$ coincide over $\qq$.

%%%%%%%%%%%%%%%%%%%%%%%%%%%%%%%%%%%%%%%%%%%%%%%%%%%%%%%%%%%%%%%%%%%%%%%%%%%%%%%%
%%%%%%%%    %%%%%%%%    %%%%%%%%    %%%%%%%%    %%%%%%%%    %%%%%%%%    %%%%%%%%
%%%%%%%%%%%%%%%%%%%%%%%%%%%%%%%%%%%%%%%%%%%%%%%%%%%%%%%%%%%%%%%%%%%%%%%%%%%%%%%%

\medskip

\section{Multiprojective spaces and multigraded polynomial rings}\label{MultiGradedImplicitization}

In this section we focus on a better understanding of the multiprojective case. This is probably the most important family of varieties in the applications among toric varieties. Here we take advantage of the particular structure of the ring. This will permit us to make precise the acyclicity regions $\Region(\gamma)$ as subregions of $\ZZ^s$.

The problem of computing the implicit equation of a rational multiprojective hypersurface is one the most important among toric cases of implicitization. The theory follows as a particular case of the one developed in the section before, but many results can be better understood. In this case, the grading group is $\ZZ^s$, which permits a deeper insight in the search for a ``good zone'' $\Region$ for $\gamma$.

For the rest of this section, we will keep the following convention. Let $s$ and $m$ be fixed positive integers, $r_1\leq\cdots\leq r_s$ non-negative integers, and write $\x_i=(x_{i,0},\hdots,x_{i,r_i})$ for $1\leq i \leq s$. Write $R_i:= \kk[\x_i]$ for $1\leq i \leq s$, $R=\bigotimes_\kk R_i$, and $R_{(a_1,\hdots,a_s)}:=\bigotimes_\kk (R_i)_{a_i}$ stands for its bigraded part of multidegree $(a_1,\hdots,a_s)$. Hence, $\dim (R_i)=r_i+1,$ and $\dim (R)=r+s$, and $\prod_{1\leq i \leq s}\PP^{r_i}=\Mproj(R)$. Set $\aaa_i:=(\x_i)$ an ideal of $R_i$, and the irrelevant ideal $B:=\bigcap_{1\leq i \leq s} \aaa_i = \aaa_1\cdots\aaa_s$ defining the empty locus of $\Mproj(R)$. 
Let $f_0,\hdots,f_n\in \bigotimes_\kk \kk[\x_i]$ be multihomogeneous polynomials of multidegree $d_i$ on $\x_i$. Assume we are given a rational map
\begin{equation}\label{eqPhiMultiProj}
 \phi: \prod_{1\leq i \leq s}\PP^{r_i} \dto \PP^n: \x \mapsto (f_0:\cdots:f_n)(\x).
\end{equation}
Take $n$ and $r_i$ such that $n=1+\sum_{1\leq i \leq s}{r_i}$. 
Set $I:=(f_0,\hdots,f_m)$ for the multihomogeneous ideal of $R$, and $X=\Mproj(R/I)$ the base locus of $\phi$.

Set-theoretically, write $V(I)$ for the base locus of $\phi$, and $\Omega:=\prod_{1\leq i \leq s}\PP^{r_i}\setminus V(I)$ the domain of definition of $\phi$. Let $\Gamma_0$ denote the graph of $\phi$ over $\Omega$, and $\Gamma:=\overline{\Gamma_0}$ its closure in $(\prod_{1\leq i \leq s}\PP^{r_i}) \times \PP^m$. Scheme-theoretically we have $\Gamma=\Mproj(\RIR)$, where $\RIR:=\bigoplus_{l\geq 0} (It)^l\subset R[t]$. 

The grading in $\RIR$ is taken in such a way that the natural map 
\[
 \alpha: R[T_0,\hdots,T_m] \to \RIR \subset R[t]: T_i\mapsto f_i t
\]
is of degree zero, and hence $(It)^l\subset R_{(ld_1,\hdots,ld_s)}\otimes_\kk \kk[t]_l$.

\begin{rem}\label{lemDimyCDMultiProj}
 From Lemma \ref{lemDimyCDToric} we have that if $\dim (V(I)) \leq 0$ in $\PP^{r_1}\times\cdots\times\PP^{r_s}$, then $\cd_B(R/I)\leq 1$.
\end{rem}

We give here a more detailed description of the local cohomology modules $H_B^\ell(R)$ that is needed for the better understanding of the region $\Region(\gamma)$.

\subsection{The local cohomology modules $H_B^\ell(R)$}

Let $k$ be a commutative noetherian ring, $s$ and $m$ be fixed positive integers, $r_1\leq\cdots\leq r_s$ non-negative integers, and write $\x_i$, $R_i$ and $R$ as before. 

\begin{defn}
 We define $\check{R}_{i}:= \frac{1}{x_{i,0}\cdots x_{i,r_i}}k[x_{i,0}^{-1},\hdots,x_{i,r_i}^{-1}]$. Given integers $1\leq i_1<\cdots<i_t \leq s$, take $\alpha=\{i_1,\hdots,i_t\}\subset \{1,\hdots,s\}$, and set $\check{R}_\alpha:=\paren{\bigotimes_{j\in \alpha} \check{R}_{j}} \otimes_k \paren{\bigotimes_{j\notin \alpha} R_{j}}$.
\end{defn}

Observe that $\check{R}_{\{i\}}\cong \check{R}_{i} \otimes_k \bigotimes_{j\neq i} R_j$.

\begin{defn} Let $\alpha$ be a subset of $\{1,\hdots,s\}$. We define $Q_\emptyset:=\emptyset$. For $\alpha \neq \emptyset$, write $\alpha=\{i_1,\hdots,i_t\}$ with $1\leq i_1<\cdots<i_t \leq s$. For any integer $j$ write $\sg(j):=1$ if $j\in \alpha$ and $\sg(j):=0$ if $j\notin \alpha$. We define 
\[
 Q_\alpha:= \prod_{1\leq j \leq s} (-1)^{\sg(j)} \NN - \sg(j)r_j\ee_j \subset \ZZ^s,
\]
 the shift of the orthant whose coordinates $\{i_1,\hdots,i_t\}$ are negative and the rest are all positive. We set $\aaa_i$ for the $R$-ideal generated
 by the elements in $\x_i$, $B:=\aaa_1\cdots \aaa_s$, $\aaa_\alpha:=\aaa_{i_1}+\cdots+\aaa_{i_t}$ and $|\alpha|=r_{i_1}+\cdots+r_{i_t}$.
\end{defn}

\begin{lem}\label{lemSuppCheckRalpha}
For every $\alpha\subset \{1,\hdots,s\}$, we have $\Supp_{\ZZ^s}(\check{R}_{\alpha})=Q_\alpha$.
\end{lem}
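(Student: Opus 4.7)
The plan is to unfold the tensor product definition of $\check{R}_\alpha$ and identify the support of each tensor factor with the corresponding one-coordinate set that appears in $Q_\alpha$. The proof is an essentially mechanical bookkeeping of supports; there is no deep obstacle.

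First, I would note that
\[
\check{R}_\alpha \;=\; \Bigl(\bigotimes_{j\in\alpha} \check{R}_j\Bigr) \otimes_k \Bigl(\bigotimes_{j\notin\alpha} R_j\Bigr)
\]
is a tensor product over a field of $\ZZ$-graded $k$-modules, each factor contributing to a distinct coordinate of $\ZZ^s$. Therefore a $\ZZ^s$-homogeneous $k$-basis of $\check{R}_\alpha$ is obtained by tensoring $\ZZ$-homogeneous $k$-bases of the factors, and consequently
\[
\Supp_{\ZZ^s}(\check{R}_\alpha) \;=\; \prod_{j=1}^{s} \Supp_\ZZ(M_j),
\]
where $M_j := \check{R}_j$ for $j\in\alpha$ and $M_j := R_j$ otherwise, and each $\Supp_\ZZ(M_j)$ is placed in the $j$-th coordinate of $\ZZ^s$.

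Next, I would compute each factor's $\ZZ$-support separately. For $j\notin\alpha$, the polynomial ring $R_j = k[x_{j,0},\ldots,x_{j,r_j}]$ has a nonzero homogeneous component in every nonnegative degree, so $\Supp_\ZZ(R_j) = \NN$. For $j\in\alpha$, the module $\check{R}_j = \tfrac{1}{x_{j,0}\cdots x_{j,r_j}}\,k[x_{j,0}^{-1},\ldots,x_{j,r_j}^{-1}]$ admits a $k$-basis given by the Laurent monomials $x_{j,0}^{-a_0}\cdots x_{j,r_j}^{-a_{r_j}}$ with integer exponents $a_\ell \geq 1$. The degree of such a monomial is $-(a_0+\cdots+a_{r_j})$, and these degrees exhaust the set of integers $n$ with $n \leq -(r_j+1)$; this is exactly the $j$-th coordinate set $-\NN - r_j\ee_j$ that appears in the definition of $Q_\alpha$.

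Inserting these coordinatewise computations into the product decomposition above yields $\Supp_{\ZZ^s}(\check{R}_\alpha) = Q_\alpha$, which is the desired equality. The only point requiring genuine care is the sign/shift bookkeeping for the negatively-graded factors; this is immediate from the description of the basis, so the argument ultimately reduces to tracking the canonical direct sum decomposition induced by the tensor product.
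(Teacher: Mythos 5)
Your argument is correct in substance, and since the paper states Lemma \ref{lemSuppCheckRalpha} without any proof, the verification you give --- supports multiply under tensor product over $k$ because the multidegree-$(d_1,\hdots,d_s)$ component of $\check{R}_\alpha$ is $\bigotimes_j (M_j)_{d_j}$ and a tensor product of nonzero $k$-vector spaces is nonzero; $\Supp_\ZZ(R_j)=\NN$; and $\Supp_\ZZ(\check{R}_j)=\{n\in\ZZ : n\le -(r_j+1)\}$ --- is exactly the bookkeeping the author leaves to the reader.

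The one step to repair is the final identification, which is precisely the ``sign/shift bookkeeping'' you claim is immediate: with the convention $\NN=\ZZ_{\geq 0}$ used in the paper, the set $\{n\le -(r_j+1)\}$ is $-\NN-(r_j+1)$, not $-\NN-r_j$, so your assertion that your computed $j$-th coordinate support ``is exactly'' the factor $(-1)^{\sg(j)}\NN-\sg(j)r_j\ee_j$ appearing in the displayed definition of $Q_\alpha$ is off by one as written. The mismatch is inherited from the paper rather than introduced by you: the displayed formula for $Q_\alpha$ is inconsistent with the paper's own later use of it (e.g.\ $Q_{\{1\}}=-\NN\times\NN-(r+1,0)$ in Section 7), and no choice of convention for $\NN$ rescues the displayed formula, since removing $0$ from $\NN$ would instead break the coordinates $j\notin\alpha$, where $0\in\Supp_\ZZ(R_j)$. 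So you should state explicitly that $\Supp_{\ZZ^s}(\check{R}_\alpha)=\prod_j S_j$ with $S_j=-\NN-(r_j+1)$ for $j\in\alpha$ and $S_j=\NN$ otherwise, and observe that this is the intended $Q_\alpha$ (the shift in the definition should read $\sg(j)(r_j+1)\ee_j$), as confirmed by the later computations in the paper. A last marginal point: the lemma nominally allows $\alpha=\emptyset$, for which $Q_\emptyset$ is declared to be $\emptyset$ while the empty tensor product convention gives $\check{R}_\emptyset=R$ with support $\NN^s$; your proof, like the paper's use of the lemma, really concerns nonempty $\alpha$, and it is worth saying so.
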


\begin{rem}\label{remQalphaQbeta} 
For $\alpha,\beta \subset \{1,\hdots,s\}$, if $\alpha \neq \beta$, then
 $Q_\alpha \cap Q_\beta=\emptyset$.
\end{rem}

\begin{lem}\label{lemHalphaRalpha} 
Given integers $1\leq i_1<\cdots<i_t \leq s$, let $\alpha=\{i_1,\hdots,i_t\}$. There are graded isomorphisms of $R$-modules
\begin{equation}\label{eqCheckRalpha}
 H_{\aaa_\alpha}^{|\alpha|+\#\alpha} (R)\cong \check{R}_\alpha.
\end{equation}
\end{lem}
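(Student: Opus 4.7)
The plan is to compute $H^\bullet_{\aaa_\alpha}(R)$ via the stable Koszul (Čech) complex on the generators of $\aaa_\alpha$, exploiting the tensor-product factorization of $R$.

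Write $R_\alpha:=\bigotimes_{j\in\alpha}R_j$ and $R_{\alpha^c}:=\bigotimes_{j\notin\alpha}R_j$, so $R=R_\alpha\otimes_k R_{\alpha^c}$. The ideal $\aaa_\alpha$ is generated by the variables living in $R_\alpha$, hence after extension to $R$ it is just $\aaa_\alpha^{R_\alpha}\otimes_k R_{\alpha^c}$. The stable Koszul complex $\check{\Cc}^\bullet_{\aaa_\alpha}(R)$ on the generating set $\bigcup_{j\in\alpha}\{x_{j,0},\hdots,x_{j,r_j}\}$ therefore equals
\[
\check{\Cc}^\bullet_{\aaa_\alpha}(R)\;\cong\;\check{\Cc}^\bullet_{\aaa_\alpha}(R_\alpha)\otimes_k R_{\alpha^c},
\]
and since $R_{\alpha^c}$ is $k$-flat, taking cohomology commutes with $-\otimes_k R_{\alpha^c}$.

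Next I would observe that the generating set of $\aaa_\alpha$ in $R_\alpha$ splits into the disjoint pieces $\{x_{j,0},\hdots,x_{j,r_j}\}$, each of which only touches the tensor factor $R_j$. Since the stable Koszul complex on a disjoint union of generators is the tensor product of the individual stable Koszul complexes, one obtains
\[
\check{\Cc}^\bullet_{\aaa_\alpha}(R_\alpha)\;\cong\;\bigotimes_{j\in\alpha}\check{\Cc}^\bullet_{\aaa_j}(R_j).
\]
All the $R_j$'s, and hence all terms in these complexes, are free $k$-modules, so the algebraic Künneth formula applies and gives
\[
H^n_{\aaa_\alpha}(R_\alpha)\;\cong\;\bigoplus_{n=\sum_{j\in\alpha}n_j}\bigotimes_{j\in\alpha}H^{n_j}_{\aaa_j}(R_j).
\]

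Now I would invoke the classical computation of local cohomology of a polynomial ring at its irrelevant ideal: $H^{n_j}_{\aaa_j}(R_j)=0$ unless $n_j=r_j+1$, in which case $H^{r_j+1}_{\aaa_j}(R_j)\cong \check R_j$ (as $\ZZ$-graded $R_j$-modules). Consequently the direct sum collapses to a single summand, occurring in cohomological degree $\sum_{j\in\alpha}(r_j+1)=|\alpha|+\#\alpha$, so that
\[
H^{|\alpha|+\#\alpha}_{\aaa_\alpha}(R)\;\cong\;\Bigl(\bigotimes_{j\in\alpha}\check R_j\Bigr)\otimes_k R_{\alpha^c}\;=\;\check R_\alpha,
\]
which is exactly the claimed isomorphism. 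The grading match is automatic: the Künneth isomorphism is $\ZZ^s$-graded because each factor is $\ZZ$-graded in the $j$-th coordinate, and combining them reproduces the $\ZZ^s$-grading of $\check R_\alpha$ described in Lemma \ref{lemSuppCheckRalpha}.

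I do not expect a serious obstacle: the only delicate points are (i) confirming that the stable Koszul/Čech complex genuinely splits as a tensor product when the generating set splits, which is a formal property of the functor $\check{\Cc}^\bullet_{(-)}$ on independent variables, and (ii) tracking the $\ZZ^s$-grading through Künneth. Both are bookkeeping rather than mathematical difficulties.
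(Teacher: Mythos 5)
Your proof is correct, but it follows a genuinely different route from the paper. You compute $H^\bullet_{\aaa_\alpha}(R)$ by factoring the stable Koszul (\v{C}ech) complex as a tensor product over the blocks of variables, $\check{\Cc}^\bullet_{\aaa_\alpha}(R)\cong\bigl(\bigotimes_{j\in\alpha}\check{\Cc}^\bullet_{\aaa_j}(R_j)\bigr)\otimes_k R_{\alpha^c}$, and then apply K\"unneth together with the classical computation $H^{i}_{\aaa_j}(R_j)=0$ for $i\neq r_j+1$ and $H^{r_j+1}_{\aaa_j}(R_j)\cong\check R_j$. The paper instead argues by induction on $|\alpha|$, using the composite-functor spectral sequence $H^{p}_{J}(H^{q}_{I}(R))\Rightarrow H^{p+q}_{I+J}(R)$ with $J=\aaa_{i_t}$ and $I$ the ideal coming from the remaining blocks: since $H^p_J(R)$ is concentrated in the single degree $r_{i_t}+1$ (the same base fact, equation \eqref{etoile}), the spectral sequence degenerates and the inductive hypothesis finishes the argument. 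Your approach buys a one-shot statement in all cohomological degrees (the vanishing outside degree $|\alpha|+\#\alpha$ falls out of the collapse of the K\"unneth sum) and makes the $\ZZ^s$-grading transparent; the paper's approach avoids K\"unneth bookkeeping over the general base and reuses exactly the spectral-sequence mechanism that reappears in the proof of Lemma \ref{LemloccohR}. One small point you should make explicit: the subsection's standing hypothesis is that $k$ is a commutative noetherian ring, not a field, and freeness of the \emph{terms} of the \v{C}ech complexes alone does not give the K\"unneth isomorphism without a Tor correction; you also need flatness of the boundaries and of the cohomology of each factor. Both hold here, because each $\check{\Cc}^\bullet_{\aaa_j}(R_j)$ has cohomology concentrated in top degree and free over $k$ (so the top boundaries split off and, descending, all cycles and boundaries are flat), so the Tor term vanishes; adding that sentence closes the only gap in your write-up.
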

\begin{proof}
Recall that for any noetherian ring $R$ and any $R$-module $M$, if $x_0,\hdots,x_r$ are variables, then
\begin{equation}\label{etoile}
 H_{(x_0,\hdots,x_r)}^i(M[x_0,\hdots,x_r])= \left\lbrace\begin{array}{ll} 0 & \mbox{if }i\neq r+1\\
									\frac{1}{x_{0}\cdots x_{r}}M[x_{0}^{-1},\hdots,x_{r}^{-1}]  & \mbox{for } i=r+1. 
\end{array}\right.
\end{equation}
We induct on $|\alpha|$. The result is obvious for $|\alpha|=1$. Assume that $|\alpha|\geq 2$ and \eqref{eqCheckRalpha} holds for $|\alpha|-1$. Take $I=\aaa_{i_1}\cdots\aaa_{i_{t-1}}$ and $J=\aaa_{i_t}$. There is a spectral sequence $H^{p}_J (H^{q}_I (R)) \Rightarrow H^{p+q}_{I+J} (R)$. By \eqref{etoile}, $H^p_J (R)=0$ for $p\neq r_{i_t}+1$. Hence, the spectral sequence stabilizes in degree $2$, and gives $H^{r_{i_t}+1}_J (H^{|\alpha|-r_{i_t}-1}_I (R)) \cong H^{|\alpha|}_{I+J} (R)$. The result follows by applying \eqref{etoile} with $M=H^{|\alpha|-r_{i_t}-1}_I (R)$, and the inductive hypothesis.
\end{proof}

\begin{lem}\label{LemloccohR} With the above notation,

\begin{equation}\label{loccohR}
H_B^\ell (R) 
\cong \bigoplus_{\mbox{\scriptsize $\begin{array}{c}1\leq i_1<\cdots<i_t \leq s \\ r_{i_1}+\cdots+r_{i_t}+1=\ell\end{array}$}}H_{\aaa_{i_1}+\cdots+\aaa_{i_t}}^{r_{i_1}+\cdots+r_{i_t}+t} (R)
\cong \bigoplus_{\mbox{\scriptsize $\begin{array}{c}\alpha \subset \{1,\hdots,s\}\\ |\alpha|+1=\ell\end{array}$}}\check{R}_\alpha.
\end{equation}
\end{lem}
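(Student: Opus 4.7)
My plan is to compute $H^\ell_B(R)$ via the identification with sheaf cohomology on the multiprojective space $\Xc=\Mproj(R)=\prod_{i=1}^s\PP^{r_i}$, combined with the Künneth formula on the product. From equation \eqref{equLC-SCforS} applied to $M=R$ (noting that $R^\sim=\OO_\Xc$), one has $H^{i+1}_B(R)\cong H^i_*(\Xc,\OO_\Xc)=\bigoplus_\mu H^i(\Xc,\OO_\Xc(\mu))$ for $i\geq 1$; the edge cases $\ell=0,1$ are handled directly from the exact sequence in \eqref{equLC-SCforS} and yield $H^0_B(R)=H^1_B(R)=0$ (assuming each $r_i\geq 1$), consistent with the fact that no nonempty $\alpha$ then satisfies $|\alpha|+1\leq 1$.

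For each multidegree $\mu=(a_1,\ldots,a_s)\in\ZZ^s$, I would invoke the Künneth formula for coherent cohomology on the product:
\[
H^i(\Xc,\OO_\Xc(\mu))\cong\bigoplus_{i_1+\cdots+i_s=i}\bigotimes_{j=1}^s H^{i_j}(\PP^{r_j},\OO(a_j)).
\]
Serre's computation gives $H^{i_j}(\PP^{r_j},\OO(a_j))\neq 0$ only when either $i_j=0$ with $a_j\geq 0$ (yielding $(R_j)_{a_j}$), or $i_j=r_j$ with $a_j\leq -r_j-1$ (yielding $(\check{R}_j)_{a_j}$). Setting $\alpha:=\{j:i_j=r_j\}$, the constraint $\sum_j i_j=i$ reduces to $|\alpha|=i$, the constraints on the $a_j$'s become the defining inequalities of $Q_\alpha$, and the tensor product on the right becomes exactly $(\check{R}_\alpha)_\mu$.

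Summing over all multidegrees $\mu$ and over all nonempty $\alpha\subseteq\{1,\ldots,s\}$ with $|\alpha|+1=\ell$, one obtains the rightmost isomorphism $H^\ell_B(R)=\bigoplus_{\emptyset\neq\alpha,\,|\alpha|+1=\ell}\check{R}_\alpha$. The pairwise disjointness of the supports $Q_\alpha$ (Lemma \ref{lemSuppCheckRalpha} and Remark \ref{remQalphaQbeta}) guarantees that the contributions from different $\alpha$'s do not overlap. The middle isomorphism of the statement then follows from Lemma \ref{lemHalphaRalpha}, which identifies $\check{R}_\alpha\cong H^{|\alpha|+\#\alpha}_{\aaa_{i_1}+\cdots+\aaa_{i_t}}(R)$.

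The principal technical obstacle is checking that the Künneth isomorphism respects the $R$-module structure, not merely the $k$-vector-space dimensions in each multidegree, so that the graded pieces $(\check{R}_\alpha)_\mu$ assemble into the correct $R$-module $\check{R}_\alpha$. This reduces to verifying that the Čech-level $R$-action on the coherent cohomology of the product transports through Künneth to the natural $R$-action on $\bigotimes_j(\check{R}_j\text{ or }R_j)$, which is routine but requires explicit bookkeeping.
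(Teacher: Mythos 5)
Your argument is correct, but it follows a genuinely different route from the paper. The paper proves the lemma by induction on $s$: it applies the Mayer--Vietoris sequence to $I=\aaa_1\cdots\aaa_{s-1}$ and $J=\aaa_s$, identifies $H^{\ell}_{I+J}(R)\cong H^{r_s+1}_J(H^{\ell-r_s-1}_I(R))$ via the spectral sequence $H^p_J(H^q_I(R))\Rightarrow H^{p+q}_{I+J}(R)$ together with \eqref{etoile}, kills the connecting maps using the disjointness of the supports $Q_\alpha$, and splits the resulting short exact sequences; the second isomorphism then comes from Lemma \ref{lemHalphaRalpha}, exactly as in your last step. Your route instead imports the comparison \eqref{equLC-SCforS} (Cox/Mustata), the K\"unneth formula on $\prod_i\PP^{r_i}$, and Serre's computation on each factor: this is shorter and more transparent, at the cost of relying on sheaf-theoretic machinery and of treating $\ell=0,1$ separately, which you do. Two remarks. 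First, your "routine bookkeeping" for the $R$-module structure can be dispatched in one line: the $R$-action on $H^{i}_{\ast}(\Xc,\OO_\Xc)$ is induced by the multiplication maps $\OO_\Xc(\mu)\to\OO_\Xc(\mu+d)$, which act factorwise and preserve the cohomological multi-index $(i_1,\dots,i_s)$ of each K\"unneth summand (equivalently, the total \v{C}ech complex of the product cover is the tensor product over $\kk$ of the \v{C}ech complexes of the factors), so the decomposition is one of graded $R$-modules. Second, your restriction to $r_i\geq 1$ is not merely cosmetic: the paper's statement formally allows $r_i=0$, and in that degenerate case the Serre dichotomy $i_j=0$ versus $i_j=r_j$ collapses and the asserted splitting can fail at the level of $R$-modules (already for $R=\kk[x,y]$, $B=(xy)$, the extension $0\to\check{R}_{\{1\}}\oplus\check{R}_{\{2\}}\to H^1_B(R)\to\check{R}_{\{1,2\}}\to 0$ does not split), although the identification of graded pieces and supports --- which is all that is used later --- still holds. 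So your hypothesis $r_i\geq1$ is in fact where the module-theoretic statement is clean, and your proof is complete in that range.
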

\begin{proof}
The second isomorphism follows from \ref{lemHalphaRalpha}. For proving the first isomorphism, we induct on $s$. The result is obvious for $s=1$. Assume that $s\geq 2$ and \eqref{loccohR} holds for $s-1$. Take $I=\aaa_1\cdots\aaa_{s-1}$ and $J=\aaa_s$. The Mayer-Vietoris long exact sequence of local cohomology for $I$ and $J$ is
\begin{equation}\label{eqSEL-MayerVie}
 \cdots \to H^\ell_{I+J}(R) \nto{\psi_\ell}{\to}  H^\ell_{I}(R)\oplus H^\ell_{J}(R) \to  H^\ell_{IJ}(R) \to  H^{\ell+1}_{I+J}(R) \nto{\psi_{\ell+1}}{\to}  H^{\ell+1}_{I}(R)\oplus H^{\ell+1}_{J}(R) \to \cdots.
\end{equation}
Remark that if $\ell\leq r_s$, then $H^\ell_{J}(R)=H^\ell_{I+J}(R)=H^{\ell+1}_{I+J}(R)=0$. Hence, $H^\ell_{B}(R)\cong H^\ell_{I}(R)$. Write $\tilde{R}:=R_1\otimes_k\cdots\otimes_k R_{s-1}$. Since the variables $\x_s$ does not appear on $I$, by flatness of $R_s$ and the last isomorphism, we have that $H^\ell_{B}(R)\cong H^\ell_{B}(\tilde{R})\otimes_k R_s$. In this case, the result follows by induction.

Thus, assume $\ell> r_s$. We next show that the map $\psi_\ell$ in the sequence \eqref{eqSEL-MayerVie} is the zero map for all $\ell$. Indeed, there is an spectral sequence $H^{p}_J (H^{q}_I (R)) \Rightarrow H^{p+q}_{I+J} (R)$. Since $H^p_J (R)=0$ for $p\neq r_s+1$, it stabilizes in degree $2$, and gives $H^{r_s+1}_J (H^{\ell-r_s-1}_I (R)) \cong H^{\ell}_{I+J} (R)$. We have graded isomorphisms 
\begin{equation}\label{eqIsosHJHI}
 H^{r_s+1}_J (H^{\ell-r_s-1}_I (R)) \cong H^{r_s+1}_J (H^{\ell-r_s-1}_I (\tilde{R})\otimes_k R_s)\cong (H^{\ell-r_s-1}_I (\tilde{R}))[\x_s^ {-1}]\cong H^{\ell-r_s-1}_I (\tilde{R})\otimes_k \check{R}_s,
\end{equation}
where the first isomorphism comes from flatness of $R_s$ over $k$, the second isomorphism follows from equation \eqref{etoile} taking $M=H^{\ell-r_s-1}_I (\tilde{R})$.
By \eqref{eqIsosHJHI} and the inductive hypothesis we have that.
\begin{equation}\label{eqHjHlR}
 H^{r_s}_J (H^{\ell-r_s-1}_I (R)) \cong 
\bigoplus_{\mbox{\scriptsize $\begin{array}{c}1\leq i_1<\cdots<i_{t-1} \leq s-1 \\ r_{i_1}+\cdots+r_{i_{t-1}}+1=\ell-r_s\end{array}$}}H_{\aaa_{i_1}+\cdots+\aaa_{i_{t-1}+t-1}}^{r_{i_1}+\cdots+r_{i_{t-1}}} (\tilde{R})\otimes_k \check{R}_s.
\end{equation}

Now, observe that the map $(H^{\ell-r_s-1}_I (\tilde{R}))[\x_s^ {-1}]\to H^\ell_{I}(R)\oplus H^\ell_{J}(R)$ is graded of degree $0$. Recall from Lemma \ref{lemSuppCheckRalpha} and \ref{lemHalphaRalpha}, and Remark \ref{remQalphaQbeta} we deduce $\Supp_{\ZZ^s}((H^{\ell-r_s-1}_I (\tilde{R}))[\x_s^ {-1}]) \cap \Supp_{\ZZ^s}(H^\ell_{I}(R)\oplus H^\ell_{J}(R))=\emptyset$. Thus, every homogeneous element on $(H^{\ell-r_s-1}_I (\tilde{R}))[\x_s^ {-1}]$ is necessary mapped to $0$.

Hence, for each $\ell$, we have a short exact sequence
\begin{equation}\label{secHIHJ}
 0 \to  H^\ell_{I}(R)\oplus H^\ell_{J}(R) \to  H^\ell_{IJ}(R) \to  H^{r_s+1}_{J} (H^{\ell+1-r_s-1}_{I} (R))\to 0.
\end{equation}
Observe that this sequence has maps of degree $0$, and for each degree $a\in \ZZ^s$ the homogeneous strand of degree $a$ splits. Moreover,
\[
 \Supp_{\ZZ^s}((H^{\ell-r_s}_I (\tilde{R}))[\x_s^ {-1}]) \sqcup \Supp_{\ZZ^s}(H^\ell_{I}(R)\oplus H^\ell_{J}(R))=\Supp_{\ZZ^s}(H^\ell_{IJ}(R)).
\]
Namely, every monomial in $H^\ell_{IJ}(R)$  comes from the module $(H^{\ell-r_s}_I (\tilde{R}))[\x_s^ {-1}]$ or it is mapped to $H^\ell_{I}(R)\oplus H^\ell_{J}(R)$ injectively, splitting the sequence \eqref{secHIHJ} of $R$-modules. Hence,
\[
 H^\ell_{B}(R)\cong H^\ell_{I}(R)\oplus H^\ell_{J}(R)\oplus H^{r_s+1}_{J} (H^{\ell-r_s}_{I} (R)).
\]
Now, $H^\ell_{I}(R) \cong H^\ell_{B}(\tilde{R})\otimes_k R_s$, $H^\ell_{J}(R)= 0$ if $\ell\neq r_s+1$ and $H^{r_s+1}_{J}(R)= \check{R}_s$. The result follows by induction and equation \eqref{eqHjHlR}.
\end{proof}

It follows from Section 4 that the ``good'' region in $\ZZ^s$ where the approximation complex $\Z.$ and the symmetric algebra $\SIR$ have no $B$-torsion is, for $\mu\in \ZZ^s$, 
\begin{equation}\label{eqRegion}
 \Region(\gamma):=\bigcup_{0<k< \min\{m,\cd_B(R)\}} (\SSup_B(\gamma)-k\cdot \gamma)\subset \ZZ^s,
\end{equation}
where $\SSup_B(\gamma):= \bigcup_{k\geq 0}(\Supp_{\ZZ^s}(H^{k}_B(R))+k\cdot\gamma)$. 

\begin{rem}\label{remInclusSupp}
It is clear that for $\gamma\in \NN^s$, then, $(\SSup_B(\gamma)-k\cdot \gamma)\supset (\SSup_B(\gamma)-(k+1)\cdot \gamma)$ for all $k\geq 0$. Thus, from Definition \ref{defRegion}, we see that 
for all $\gamma\in \NN^s$, 
\[
 \Region(\gamma)=\SSup_B(\gamma)-\gamma.
\]
\end{rem}

\begin{cor}\label{corRegionMgr} For $\gamma\in \ZZ^s$, we have that 
 \begin{equation}\label{eqRegionMgr}
 \Region(\gamma)=\bigcup_{\alpha \subset \{1,\hdots,s\}}(Q_\alpha +|\alpha|\gamma).
\end{equation}
\end{cor}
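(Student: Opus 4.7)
The plan is a direct combinatorial computation: substitute the explicit description of the local cohomology modules $H^\ell_B(R)$ developed in the preceding subsection into the definitions of $\SSup_B(\gamma)$ and $\Region(\gamma)$, and then simplify using the monotonicity recorded in Remark \ref{remInclusSupp}.

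Concretely, I would first combine the decomposition $H_B^\ell(R) \cong \bigoplus_{|\alpha|+1=\ell} \check{R}_\alpha$ from Lemma \ref{LemloccohR} with the identification $\Supp_{\ZZ^s}(\check{R}_\alpha) = Q_\alpha$ from Lemma \ref{lemSuppCheckRalpha} and the pairwise disjointness $Q_\alpha \cap Q_\beta = \emptyset$ of Remark \ref{remQalphaQbeta} to read off the identity
\[
\Supp_{\ZZ^s}(H^\ell_B(R)) \;=\; \bigsqcup_{|\alpha|+1 = \ell} Q_\alpha.
\]
Plugging this into $\SSup_B(\gamma)=\bigcup_{k\geq 0}(\Supp_{\ZZ^s}(H^{k}_B(R))+k\gamma)$ and reindexing the union to run over the subsets $\alpha \subset \{1,\ldots,s\}$ instead of the cohomological degree yields
\[
\SSup_B(\gamma) \;=\; \bigcup_{\alpha} \bigl(Q_\alpha + (|\alpha|+1)\gamma\bigr).
\]

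Next I would substitute this into $\Region(\gamma) = \bigcup_{0<k<\min\{m,\cd_B(R)\}}(\SSup_B(\gamma)-k\gamma)$ and swap the two unions, obtaining
\[
\Region(\gamma) \;=\; \bigcup_{\alpha} \;\bigcup_{0<k<\min\{m,\cd_B(R)\}} \bigl(Q_\alpha + (|\alpha|+1-k)\gamma\bigr).
\]
For $\gamma \in \NN^s$ (the case of interest, since the $f_i$ have degree in the effective cone), Remark \ref{remInclusSupp} shows that the family $\{\SSup_B(\gamma)-k\gamma\}_k$ is monotonically decreasing in $k$, so the inner union collapses to its $k=1$ term, giving $Q_\alpha + |\alpha|\gamma$ and producing the claimed formula.

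The bulk of the argument is formal bookkeeping on top of the already-established decomposition of $H^\ell_B(R)$; the one delicate point is the passage from the union over $k$ to a single shift. Remark \ref{remInclusSupp} takes care of this cleanly for $\gamma \in \NN^s$, which is all that is needed for the implicitization application of Section \ref{ToricImplicitization}. For genuinely arbitrary $\gamma \in \ZZ^s$ as stated in the corollary, a small additional argument would be needed to check that the $Q_\alpha + (|\alpha|+1-k)\gamma$ for $k \geq 2$ are already absorbed in $Q_\alpha + |\alpha|\gamma$; this is really the only potential obstacle, and my expectation is that it follows from the shape of the orthants $Q_\alpha$ together with the fact that shifting by $\gamma$ permutes the inclusion structure among them in a controlled way.
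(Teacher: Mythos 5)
Your proposal is correct and is essentially the paper's own argument: combine Lemma \ref{LemloccohR} with Lemma \ref{lemSuppCheckRalpha} to identify $\Supp_{\ZZ^s}(H^{k}_B(R))$ with the union of the $Q_\alpha$ with $|\alpha|+1=k$, then use Remark \ref{remInclusSupp} to collapse $\Region(\gamma)$ to $\SSup_B(\gamma)-\gamma$ and reindex the union over the subsets $\alpha$. The caveat you raise about general $\gamma\in\ZZ^s$ versus $\gamma\in\NN^s$ applies equally to the paper's proof, which likewise just invokes Remark \ref{remInclusSupp} (stated only for $\gamma\in\NN^s$) without further comment.
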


\begin{proof}
Combining Lemma \ref{LemloccohR} and Lemma \ref{lemSuppCheckRalpha} we have
\[
\Supp_{\ZZ^s}(H^{k}_B(R))= \bigcup_{\mbox{\scriptsize $\begin{array}{c}\alpha \subset \{1,\hdots,s\}\\ |\alpha|+1=k\end{array}$}}Q_\alpha.
\]
From Remark \ref{remInclusSupp} and the definition of $\SSup_B(\gamma)$ we have 
$
 \Region(\gamma)=\bigcup_{k\geq 1}(\Supp_{\ZZ^s}(H^{k}_B(R))+(k-1)\gamma).
$

Combining both results, we get that
\[
 \Region(\gamma)=\bigcup_{k\geq 1}\paren{ \bigcup_{\mbox{\scriptsize $\begin{array}{c}\alpha \subset \{1,\hdots,s\}\\ |\alpha|=k-1\end{array}$}}Q_\alpha +|\alpha|\gamma}=\bigcup_{\alpha \subset \{1,\hdots,s\}}(Q_\alpha +|\alpha|\gamma).\qedhere
\]
\end{proof}

%%%%%%%%%%%%%%%%%%%%%%%%%%%%%%%%%%%%%%%%%%%%%%%%%%%%%%%%%%%%%%%%%%%%%%%%%%%%%%%%
%%%%%%%%    %%%%%%%%    %%%%%%%%    %%%%%%%%    %%%%%%%%    %%%%%%%%    %%%%%%%%
%%%%%%%%%%%%%%%%%%%%%%%%%%%%%%%%%%%%%%%%%%%%%%%%%%%%%%%%%%%%%%%%%%%%%%%%%%%%%%%%

\section{Examples}\label{Examples}
In this part we give an example where we determine the ``good zone'' of acyclicity of the $\Zc$-complex, and later we take an specific rational map. We start by summarizing our results for biprojective spaces.

Let $k$ be a field. Take $r\leq s$ two positive integers, and consider $\Xc$ to be the biprojective space $\PP^r_\kk\times \PP^s_\kk$. Take $R_1:=k[x_0,\hdots,x_r]$, $R_2:=k[y_0\hdots,y_s]$, and $\G:=\ZZ^2$. Write $R:=R_1\otimes_k R_2$ and set $\deg(x_i)=(1,0)$ and $\deg(y_i)=(0,1)$ for all $i$. Set $\aaa_1:=(x_0\hdots,x_r)$, $\aaa_2:=(y_0\hdots,y_s)$ and define $B:=\aaa_1\cdot \aaa_2 \subset R$ the irrelevant ideal of $R$, and $\mm:= \aaa_1+\aaa_2\subset R$, the ideal corresponding to the origin in $\Spec(R)=\AA^{r+s+2}_\kk$.

From Lemma \ref{LemloccohR} one has:
\begin{enumerate}
 \item $H^{r+1}_B(R) \cong H^{r+1}_{\aaa_1}(R)=\check{R}_{1}\otimes_k R_2$,
 \item $H^{s+1}_B(R) \cong H^{s+1}_{\aaa_2}(R)=R_1\otimes_k\check{R}_{2}$,
 \item $H^{r+s+1}_B(R) \cong H^{r+s+2}_{\mm}(R)=\check{R}_{\{1,2\}}$,
 \item $H^\ell_B(R)=0$ for all $\ell\neq r+1,s+1$ and $r+s+1$,
\end{enumerate}
if $r>s$ and $(1)$ and $(2)$ are replaced by $H^{r+1}_B(R) \cong H^{r+1}_{\aaa_1}(R)\oplus H^{s+1}_{\aaa_2}(R)=\check{R}_{1} \otimes_k R_2\oplus R_1\otimes_k\check{R}_{2}$ if $r=s$.

Let $\NN$ denote $\ZZ_{\geq 0}$. From Corollary \ref{corRegionMgr}, the subregion of $\ZZ^2$ of supports of the modules $H^\ell_B(R)$ can be described as follows that if $r>s$:
\begin{enumerate}
 \item $\Supp_{\ZZ^2}(H^{r+1}_B(R)) = Q_{\{1\}} = -\NN\times \NN-(r+1,0)$,
 \item $\Supp_{\ZZ^2}(H^{s+1}_B(R)) = Q_{\{2\}} = \NN\times -\NN-(0,s+1)$,
 \item $\Supp_{\ZZ^2}(H^{r+s+1}_B(R)) = Q_{\{1,2\}} = -\NN\times -\NN-(r+1,s+1)$,
\end{enumerate}
and $(1)$ and $(2)$ are replaced by $\Supp_{\ZZ^2}(H^{r+1}_B(R)) = Q_{\{1\}} \cup Q_{\{2\}}= (-\NN\times \NN-(r+1,0)) \cup  (\NN\times -\NN-(0,s+1))$ if $r=s$.

Assume we are given $r+s+2$ bihomogeneous polynomials $f_0,\hdots,f_{r+s+1}$ of bidegree $\gamma:=(a,b)\in \ZZ^2$, denote $n=r+s+1$ and define $I:=(f_0,\hdots,f_{n})$ ideal of $R$. Assume $\cd_B(R/I)\leq 1$, hence $\cd_B(H_i)\leq 1$ for all $i$. We have that 
\[
\begin{array}{rcl}
 \SSup_B(a,b)&=&(-\NN\times \NN-(r+1,0)+(r+1)(a,b))\cup\\
&&(\NN\times -\NN-(0,s+1)+(s+1)(a,b))\cup\\
&&(-\NN\times -\NN-(r+1,s+1)+(r+s+1)(a,b)),
\end{array}
\]
 from \eqref{eqRegion} and \ref{corRegionMgr} we have that $\Region(a,b)=\SSup_B(a,b)-(a,b)$, as is shown in the picture below, we obtain the formula
\[
\begin{array}{rcl}
 \Region(a,b)&=& \paren{Q_{\{1\}}+r(a,b)}\cup \paren{Q_{\{2\}}+s(a,b)}\cup \paren{Q_{\{1,2\}}+(r+s)(a,b)}\\
&=&(-\NN\times \NN+(ra-r-1,rb))\cup\\
&&(\NN\times -\NN+(sa,sb-s-1))\cup\\
&&(-\NN\times -\NN+(ra+sa-r-1,rb+sb-s-1)),
\end{array}
\]

Since $s\leq r$, we have that the region $\Aa$ of vanishing is:
\[
 \Aa:=\complement\Region(a,b) = \NN^2+(ra-r,rb+sb-s)\cup \NN^2+(ra+sa-r,sb-s).
\]

\begin{center}
 \includegraphics[scale=1]{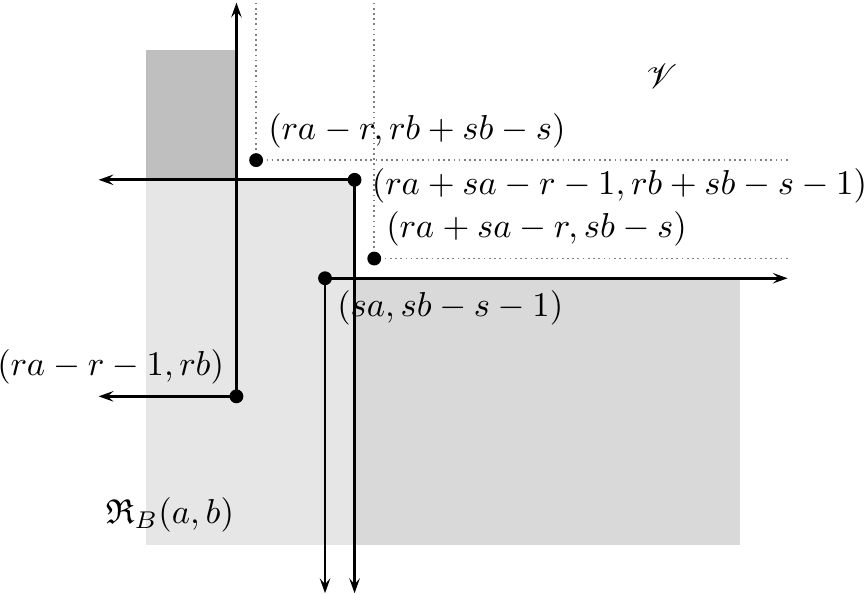}
\end{center}

Taking $\mu\in \Aa$, the $\Zc$-complex associated to $f_0,\hdots,f_n$ in degree $\nu$ is acyclic and $\Sym(f_0,\hdots,f_5)$ has no $B$-torsion. We conclude that we can compute the implicit equation of $\phi$ as a factor of $\det((\Z.)_{(\mu,\ast)})$ for $\mu\in \Aa$.

\subsubsection{Bigraded surfaces}

As it is probably the most interesting case from a practical point of view, and has been widely studied (see e.g.\ \cite{Co03}), we restrict our computations to parametrizations of a bigraded surface given as the image of a rational map $\phi: \PP^1\times\PP^1 \dashrightarrow \PP^3$ given by $4$ homogeneous polynomials of bidegree $(a,b)\in\ZZ^2$. Thus, in this case, the $\Zc$-complex can be easily computed, and the region $\Region(a,b)$ where are supported the local cohomology modules
\begin{equation*}\label{Regione1e2}
 \Region(a,b)=(\Supp_{\ZZ^2}(H^{2}_B(R))+(a,b))\cup(\Supp_{\ZZ^2}(H^{3}_B(R))+2\cdot(a,b)).
\end{equation*}
This case 

Let $k$ be a field. Assume $\Tc$ is the biprojective space $\PP^1_\kk\times \PP^3_\kk$. Take $R_1:=k[x_1,x_2]$, $R_2:=k[y_1,y_2,y_3,y_4]$. Write $R:=R_1\otimes_k R_2$ and set $\deg(x_i)=(1,0)$ and $\deg(y_i)=(0,1)$ for all $i$. Set $\aaa_1:=(x_1,x_2)$, $\aaa_2:=(y_1,y_2,y_3,y_4)$ and define $B:=\aaa_1\cdot \aaa_2 \subset R$ the irrelevant ideal of $R$, and $\mm:= \aaa_1+\aaa_2\subset R$, the ideal corresponding to the origin in $\Spec(R)$.

By means of the Mayer-Vietoris long exact sequence in local cohomology, we have that:

\begin{enumerate}
 \item $H^2_B(R) \cong (\check{R}_{1}\otimes_k R_2) \otimes (R_1 \otimes_k \check{R}_{2})$,
 \item $H^3_B(R) \cong H^4_{\mm}(R)=\check{R}_{\{1,2\}}$,
 \item $H^\ell_B(R)=0$ for all $\ell\neq 2$ and $3$. 
\end{enumerate}

Thus, we get that:

\begin{enumerate}
 \item $\Supp_{\ZZ^2}(H^2_B(R)) = -\NN\times \NN+(-2,0)\cup\NN\times -\NN+(0,-2)$.
 \item $\Supp_{\ZZ^2}(H^3_B(R)) = -\NN\times -\NN+(-2,-2)$, .
\end{enumerate}

\begin{equation}
 \Region(a,b)=(-\NN\times \NN+(a-2,b))\cup(\NN\times -\NN+(a,b-2))\cup(-\NN\times -\NN+(2a-2,2b-2)).
\end{equation}

\begin{center}
 \includegraphics[scale=1]{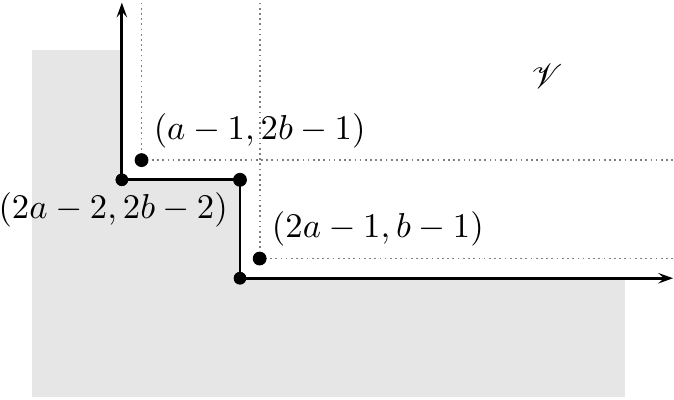}
\end{center}

If $\nu\notin  \Region(a,b)$, then the complex
\begin{equation*}
\ 0 \to (\Zc_3)_{(\nu,*)}(-3) \to (\Zc_{2})_{(\nu,*)}(-2) \to (\Zc_1)_{(\nu,*)}(-1) \nto{M_\nu}{\lto} (\Zc_0)_{(\nu,*)}\to 0 .
\end{equation*}
is acyclic and $H_B^0(H_0(\Zc_\bullet)_\nu)=H_B^0(\SIR)_\nu=0$.

\subsection{Implicitization with and without embedding}

The following example illustrates how the method of computing the implicit equation by means of approximation complexes works in one example.

\begin{exmp} Assume that we take the Newton polytope $\Nc(f)= \conv(\{(0,0),(a,0),(0,b),(a,b)\})$ and consider $\Tc= \Tc_{\Nc(f)}$ the toric variety associated to $\Nc(f)$ (for a wider reference see \cite{BD07,BDD08,Bot09}). We easily see that $\Tc\cong \PP^1\times \PP^1$ embedded in $\PP^{(a+1)(b+1)-1}$ via a Segre-Veronese embedding.

We define the ring with bihomogeneous coordinates $(s:u)$ and $(t:v)$ and the polynomials $f_0,f_1,f_2,f_3$ defining the rational map. And we copy the code in the algebra software Macaulay2 \cite{M2}

{\footnotesize \begin{verbatim}
i1 : QQ[s,t]
i2 : f0 = 3*s^2*t-2*s*t^2-s^2+s*t-3*s-t+4-t^2;
i3 : f1 = 3*s^2*t-s^2-3*s*t-s+t+t^2+t^2+s^2*t^2;
i4 : f2 = 2*s^2*t^2-3*s^2*t-s^2+s*t+3*s-3*t+2-t^2;
i5 : f3 = 2*s^2*t^2-3*s^2*t-2*s*t^2+s^2+5*s*t-3*s-3*t+4-t^2;
\end{verbatim}}

The good bound $\nu_0$ that follows from \cite{BD07,BDD08,Bot09} is $\nu_0=2$. We compute with the algorithm developed in \cite{BD10} the matrix $M_\nu$ for $\nu=2$:
{\footnotesize \begin{verbatim}
i7 : nu=2;
i8 : L={f0,f1,f2,f3};
i9 : rM=representationMatrix (teToricRationalMap L,nu);
                                25                          51
o9 : Matrix (QQ[X , X , X , X ])   <--- (QQ[X , X , X , X ])
                 0   1   2   3               0   1   2   3
\end{verbatim}}
It can be easily checked that it is a matrix that represents the surface (with perhaps some other extra factor), since it ranks drops over the surface:
{\footnotesize \begin{verbatim}
i10 : R=QQ[s,t,X_0..X_3];
i11 : Eq=sub(sub(rM,R), {X_0=>(sub(f0,R)), X_1=>(sub(f1,R)), X_2=>(sub(f2,R)), X_3=>(sub(f3,R))});
              25       51
o11 : Matrix R   <--- R
i12 : rank rM
o12 = 25
i13 : rank Eq
o13 = 24
\end{verbatim}}
\end{exmp}

\begin{exmp}\label{exmpBigraded} Without a toric embedding.
 Let us consider the bigraded ring $S$ as follows (we put three coordinates since we also keep the total degree):

{\footnotesize \begin{verbatim}
i1 : S=QQ[s,u,t,v,Degrees=>{{1,1,0},{1,1,0},{1,0,1},{1,0,1}}];
i2 : f0 = 3*s^2*t*v-2*s*u*t^2-s^2*v^2+s*u*t*v-3*s*u*v^2-u^2*t*v+4*u^2*v^2-u^2*t^2;
i3 : f1 = 3*s^2*t*v-s^2*v^2-3*s*u*t*v-s*u*v^2+u^2*t*v+u^2*t^2+u^2*t^2+s^2*t^2;
i4 : f2 = 2*s^2*t^2-3*s^2*t*v-s^2*v^2+s*u*t*v+3*s*u*v^2-3*u^2*t*v+2*u^2*v^2-u^2*t^2;
i5 : f3 = 2*s^2*t^2-3*s^2*t*v-2*s*u*t^2+s^2*v^2+5*s*u*t*v-3*s*u*v^2-3*u^2*t*v+4*u^2*v^2-u^2*t^2;
\end{verbatim}}

As we have seen in equation \eqref{Regione1e2}
\begin{equation*}
 \Region(a,b)=(-\NN\times \NN+(a-2,b))\cup(\NN\times -\NN+(a,b-2))\cup(-\NN\times -\NN+(2a-2,2b-2)).
\end{equation*}
Hence, since $(a,b)=(2,2)$,
\[
 \Aa:=\complement\Region(a,b)=\NN^2+(a-1,2b-1)\cup \NN^2+(2a-1,b-1)=\NN^2+(1,3)\cup \NN^2+(3,1).
\]

Thus, the good bounds here are $\nu_0=(1,3)$ or $\nu_0=(3,1)$. We compute with the algorithm developed in \cite{Bot10M2} the matrix $M_\nu$ for $\nu=(3,1)$:
{\footnotesize \begin{verbatim}
i6 : L = {f0,f1,f2,f3};
i7 : nuu1 = {4,3,1};
i8 : rM = representationMatrix (L,nuu1);
                                8                           8
o8 : Matrix (QQ[X , X , X , X ])   <--- (QQ[X , X , X , X ])
                 0   1   2   3               0   1   2   3
\end{verbatim}}
And we verify that it is a matrix that represents the surface (with perhaps some other extra factor) by substituting on the parametrization of the surface:
{\footnotesize \begin{verbatim}
i9 : R = QQ[s,u,t,v,X_0..X_3];
i10: Eq = sub(sub(rM,R), {X_0=>(sub(f0,R)), X_1=>(sub(f1,R)), X_2=>(sub(f2,R)), X_3=>(sub(f3,R))});
              8        8
o10 : Matrix R   <--- R
i11 : rank rM
o11 = 8
i12 : rank Eq
o12 = 7
\end{verbatim}}
We can compute the implicit equation by taking gcd of the maximal minors of $M_\nu$ or just taking one minor and factorizing. We can compute one determinant and obtaining the implicit equation by doing
{\footnotesize \begin{verbatim}
i13 : eq = implicitEq ({f0,f1,f2,f3},{5,3,2})}
               8             7               6 2            5 3           4 4            3 5          2 6       
o13 = 63569053X  - 159051916X X  + 175350068X X  - 82733240X X  + 2363584X X  + 14285376X X  + 139968X X  ...
               0             0 1             0 1            0 1           0 1            0 1          0 1    
\end{verbatim}}

\end{exmp}

\subsubsection{On the size of the matrices}

Consider the polytope $\Nc(f)= \conv(\{(0,0),(a,0),(0,b),(a,b)\})$ as is the case in the example above, and consider $\Tc= \Tc_{\Nc(f)}$ the toric variety associated to $\Nc(f)$ (for a wider reference see \cite{BD07,BDD08,Bot09}). We will compare the matrices obtained by means of a toric embedding in \cite{BD07,BDD08,Bot09}  with respect to the matrix $M_\nu$ we get without an embedding, in the biprojective case. Thus $\Tc$ is a bidimensional surface embedded in $\PP^{(a+1)(b+1)-1}$. We obtain a map $g: \Tc \dto \PP^3$ given by homogeneous polynomials of degree $1$. Hence, for $\nu_0=2$ we have that
\[
 M_2 \in \mat_{h_{R}(2),h_{Z_1}(3)}(\kk[X_0,X_1,X_2,X_3]),
\]
and $h_{R}(2)=(2a+1)(2b+1)$. This is the case above: $a=2$, $b=2$ then $h_{R}(2)=(2a+1)(2b+1)=5\cdot 5=25$.

\medskip

In the bigraded setting (assume $a\leq b$), 
\[
 M_2 \in \mat_{h_{R}(2a-1,b-1),h_{Z_1}(3a-1,2b-1)}(\kk[X_0,X_1,X_2,X_3])
\]
and $h_{R}(2a-1,b-1)= \dim(\kk[s,u]_{2a-1}) \dim(\kk[t,v]_{b-1})=2ab$. This is the case above: $a=2$, $b=2$ then $h_{R}(2)=2\cdot2\cdot2=8$.

\medskip
Next result gives a more detailed analysis of the size of the matrix $M_\nu$.

\begin{lem}\label{lemSizeMnu}
 Given a finite rational map $\phi: \PP^1\times\PP^1 \dashrightarrow \PP^3$ with finitely many laci base points (or none), given by $4$ homogeneous polynomials $f_0,f_1,f_2,f_3$ where $f_i\in R_{(a,b)}$. Take $\nu=(2a-1,b-1)$ (equiv.\ with $\nu=(a-1,2b-1)$). Write $\Delta_\nu=\det((\Z.)_\nu)$ for the determinant of the $\nu$-strand, then
 \[
  \deg(\Delta_\nu)=2ab-\dim\paren{(H_2)_{(4a-1,3b-1)}}
 \]
 and the matrix $M_\nu$ is square of size $2ab\times 2ab$ iff $(H_2)_{(4a-1,3b-1)}=0$.
\end{lem}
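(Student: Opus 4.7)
The plan is to exploit the acyclicity of $(\Z.)_\nu$ provided by Theorem \ref{CorToricImplicit} (whose hypotheses follow from the assumption of finitely many l.c.i.\ base points), then identify the ranks of the free $\kk[\T]$-modules $(\Zc_i)_\nu$ with Koszul homologies of $\f$ at carefully chosen multidegrees. First I would verify that $\nu=(2a-1,b-1)$ lies in the complement of $\Region((a,b))$, so $(\Z.)_\nu$ is acyclic in positive homological degrees with $H_0((\Z.)_\nu)=(\SIR)_\nu$ free of $B$-torsion. The module $(\Zc_i)_\nu = (Z_i[i\gamma]\otimes_R R[\T])_\nu$ is free of rank $r_i := \dim_\kk (Z_i)_{\nu+i\gamma}$. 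The key observation is that $(K_{i+1})_{\nu+i\gamma} = \binom{4}{i+1}R_{\nu-\gamma}=0$, since $\nu-\gamma=(a-1,-1)$ has a negative second coordinate. Hence $(B_i)_{\nu+i\gamma}=0$ and $(Z_i)_{\nu+i\gamma}\cong (H_i)_{\nu+i\gamma}$, so $r_i=\dim(H_i)_{\nu+i\gamma}$; in particular $r_0=\dim R_\nu=2ab$, and $M_\nu$ has dimensions $r_0\times r_1$.

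For the equivalence, I would apply the Koszul Euler characteristic at degree $\nu+\gamma$ (where $(K_j)_{\nu+\gamma}=0$ for $j\geq 2$) to obtain $r_1=\dim(H_1)_{\nu+\gamma}=2ab+\dim(R/I)_{\nu+\gamma}$, and combine this with the vanishing Euler characteristic $r_0-r_1+r_2-r_3=0$ of the acyclic complex (since $(\SIR)_\nu$ is $\kk[\T]$-torsion, being supported on the codimension-one locus $V(\Delta_\nu)$). This yields $r_2-r_3=\dim(R/I)_{\nu+\gamma}\geq 0$. Since $r_2\geq r_3\geq 0$, the hypothesis $r_2=\dim(H_2)_{(4a-1,3b-1)}=0$ forces simultaneously $r_3=0$ and $\dim(R/I)_{\nu+\gamma}=0$, whence $r_1=r_0=2ab$ and the complex reduces to $0\to F_1\to F_0\to 0$ with $M_\nu$ square of size $2ab\times 2ab$; conversely, $M_\nu$ being genuinely of this square size forces $r_2=r_3=0$, so $r_2=0$.

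For the degree formula I would compute the Hilbert series of $(\SIR)_\nu$. Since the differentials of $(\Z.)_\nu$ are linear in $\T$, we have $F_i\cong \kk[\T](-i)^{r_i}$, so
\[
HS_{(\SIR)_\nu}(s) \;=\; \frac{r_0 - r_1 s + r_2 s^2 - r_3 s^3}{(1-s)^{4}}.
\]
As $(\SIR)_\nu$ is supported in codimension one on $V(\Delta_\nu)\subset\spec\kk[\T]$, this simplifies to $P(s)/(1-s)^3$ with $P(1)=\deg\Delta_\nu$. Substituting $r_i=\dim(H_i)_{\nu+i\gamma}$ and invoking the Euler relation should then yield $\deg\Delta_\nu = 2ab - \dim(H_2)_{(4a-1,3b-1)}$. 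The hard part I foresee is controlling the contribution of the $r_3$ term to $P(1)$: this requires either a vanishing of $(H_3)_{(5a-1,4b-1)}$ under the l.c.i.\ hypothesis, or a finer divisorial bookkeeping separating the codim-one support of $(\SIR)_\nu$ from its higher-codimension primary pieces, so that the formula collapses cleanly to the stated expression.
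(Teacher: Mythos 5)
Your reduction of the strand $(\Z.)_\nu$ to Koszul homology is exactly the paper's: since $\nu-\gamma=(a-1,-1)$ has a negative second coordinate, $(K_{i+1})_{\nu+i\gamma}=0$, hence $(B_i)_{\nu+i\gamma}=0$ and $(Z_i)_{\nu+i\gamma}=(H_i)_{\nu+i\gamma}$, i.e.\ $(\Z.)_\nu=(\M.)_\nu$, and the degree count is then a Hilbert--series/linearity argument on an acyclic strand. The genuine gap is precisely the point you flag at the end: the term $r_3=\dim (H_3)_{\nu+3\gamma}=\dim (H_3)_{(5a-1,4b-1)}$. Without killing it, neither conclusion closes. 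Your Hilbert-series computation gives $\deg\Delta_\nu=-N'(1)=r_1-2r_2+3r_3=r_0-r_2+2r_3$, not $2ab-r_2$; no ``finer divisorial bookkeeping'' can remove the $2r_3$, since a nonzero $r_3$ genuinely changes the multiplicity. Likewise, in the converse direction of the ``iff'', $r_1=r_0=2ab$ only yields $r_2=r_3$ from the Euler relation, not $r_2=0$; your assertion that a square $M_\nu$ ``forces $r_2=r_3=0$'' is unjustified as written.

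The missing ingredient is elementary and is how the paper proceeds: a finite (or empty) base locus gives $\codim V(I)\geq 2$ in $\Spec(R)$, and since $R$ is a Cohen--Macaulay polynomial ring, $\grade(I)\geq 2$; by depth sensitivity of the Koszul complex on the four elements $f_0,\dots,f_3$ one gets $H_i=0$ for $i>4-2=2$. In particular $(H_3)_{(5a-1,4b-1)}=0$, and by your own boundary-vanishing observation $(Z_3)_{\nu+3\gamma}=(H_3)_{\nu+3\gamma}=0$, so the strand is the three-term complex
\[
0\to (H_2)_{(4a-1,3b-1)}\otimes_\kk\kk[\T]\to (H_1)_{(3a-1,2b-1)}\otimes_\kk\kk[\T]\to R_{(2a-1,b-1)}\otimes_\kk\kk[\T]\to 0,
\]
acyclic with linear differentials. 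Then $r_1=r_0+r_2$, so $M_\nu$ has size $2ab\times(2ab+r_2)$ and is square of size $2ab$ iff $r_2=0$, and $\deg\Delta_\nu=r_1-2r_2=r_0-r_2=2ab-\dim\paren{(H_2)_{(4a-1,3b-1)}}$, as stated. Your intermediate identity $r_1=2ab+\dim(R/I)_{\nu+\gamma}$ is correct but becomes unnecessary once $r_3=0$ is in hand.
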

\begin{proof}
 Let $\Z.: 0 \to \Zc_3\to \Zc_2\to \Zc_1\to \Zc_0\to 0$ be the $\Zc$-complex, with $\Zc_i=Z_i(ia,ib)\otimes_k k[\X]$. Take $\nu=(2a-1,b-1)$. As $b-1<\deg_{t,v}(f_i)$ for all $i$, $(B_i(ia,ib))_\nu=0$ for all $i$, hence $(Z_i)_{i(a,b)+\nu}=(H_i)_{i(a,b)+\nu}$ for all $i$. This is, $(\Z.)_\nu= (\M.)_\nu$. Since $\depth(I)\geq 2$, $H_i=0$ for $i>2$, thus $(Z_3)_{3(a,b)+\nu}=0$. Since the complex
\[
 (\Z.)_{(2a-1,b-1)}:0 \to (H_2)_{(4a-1,3b-1)} [\X] \nto{N_\nu}{\lto} (H_1)_{(3a-1,2b-1)} [\X] \nto{M_\nu}{\lto} R_{(2a-1,b-1)}[\X]\to 0
\]
 is acyclic and the entries of $M_\nu$ and $N_\nu$ are linear on $X_i$'s, $\deg(\Delta_\nu)=\dim (R_\nu)-\dim\paren{(H_2)_{(4a-1,3b-1)}}$ and $M_\nu$ is square of size $2ab\times 2ab$ iff $(H_2)_{(4a-1,3b-1)}=0$.
\end{proof}

We can compute the degree of the image as follows.

\begin{lem}\label{lemDegImage}
 Given a finite rational map $\phi: \PP^1\times\PP^1 \dashrightarrow \PP^3$, given by $4$ homogeneous polynomials $f_0,f_1,f_2,f_3$ defining an ideal $I$, where $f_i\in R_{(a,b)}$. Assume that $\Pc:=\proj(R/I)$ is finite and laci. Then,
\[
 \deg(\phi)\deg(\Hc)=2ab-\sum_{x\in \Pc}e_x,
\]
where $e_x=e(I^\sim_x,\OO_{\PP^1\times \PP^1,x})$ is the multiplicity at $x$.
\end{lem}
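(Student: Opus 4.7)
The plan is to extract the formula from degree bookkeeping applied to the determinant $\det((\Z.)_\mu)$ for a $\mu$ lying in the admissible region. I would choose $\mu = (2a-1,b-1)$, which by the description of $\Aa = \complement\,\Region(a,b)$ given earlier in this section lies in the vanishing region. Since $\Pc$ is finite and laci, the hypotheses of Theorem \ref{CorToricImplicit} are satisfied, so
$$\det((\Z.)_\mu) = \Hc^{\deg(\phi)} \cdot G,$$
with $G$ coprime to $\Hc$ in $\kk[\X]$. Taking total degrees and applying Lemma \ref{lemSizeMnu} one gets
$$\deg(\phi)\deg(\Hc) = 2ab - \dim\bigl((H_2)_{(4a-1,3b-1)}\bigr) - \deg(G).$$

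The lemma is therefore equivalent to the identity
$$\dim\bigl((H_2)_{(4a-1,3b-1)}\bigr) + \deg(G) = \sum_{x\in\Pc}e_x,$$
which I would prove by localizing at each base point $x \in \Pc$. By Lemma \ref{remExtraFactor}, $G = \prod_\qq L_\qq^{e_\qq - l_\qq}$, and the primes $\qq$ appearing correspond exactly to the non-lci points of $\Pc$: at an lci point $x$, $I_x$ is a complete intersection in $\OO_{\PP^1\times\PP^1,x}$, hence of linear type, so $\SIR$ and $\RIR$ agree in a neighborhood of $x$ and $x$ contributes nothing to $\deg(G)$. For such an lci point the local contribution to $\dim((H_2)_{(4a-1,3b-1)})$ equals the colength $\length(\OO_x/I^\sim_x)$, which for a complete intersection agrees with the Hilbert-Samuel multiplicity $e_x$. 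For a non-lci laci point $x$, one expects the local contribution to $H_2$ together with the corresponding term $(e_\qq - l_\qq)\deg(L_\qq)$ from Lemma \ref{remExtraFactor} to add up to $e_x$.

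The main obstacle is precisely this last step: verifying, at each non-lci laci base point, that the splitting of $e_x$ between the Koszul-homology contribution and the extra-factor contribution really gives the expected sum. This requires a careful local analysis of the almost-complete-intersection structure, mirroring the Buchsbaum-Rim-type arguments in \cite[Prop. 5]{BCJ06} and \cite{BDD08, Bot09}. Once this local bookkeeping is in place, summing over $x \in \Pc$ yields the displayed identity, and the lemma follows.
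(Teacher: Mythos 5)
Your proposal does not establish the lemma: it reduces it to the identity $\dim\bigl((H_2)_{(4a-1,3b-1)}\bigr)+\deg(G)=\sum_{x\in\Pc}e_x$ and then, at the crucial point, only says that ``one expects'' the local contributions at non-lci (and even at lci) base points to add up to $e_x$. That local statement is precisely the content that needs proof, and nothing in Theorem \ref{CorToricImplicit}, Lemma \ref{lemSizeMnu} or Lemma \ref{remExtraFactor} supplies it: Lemma \ref{remExtraFactor} expresses $G$ in terms of $e_\qq-l_\qq$ for primes $\qq$ of $\kk[\X]$ (divisors in the target $\PP^3$), and relating those exponents, together with the graded piece of the second Koszul homology, to the Hilbert--Samuel multiplicities $e(I^\sim_x,\OO_{\PP^1\times\PP^1,x})$ at the base points is a genuinely nontrivial step. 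Worse, the paper's logic runs in the opposite direction: the remark following the lemma \emph{uses} Lemma \ref{lemDegImage}, combined with Lemmas \ref{lemSizeMnu} and \ref{remExtraFactor}, to deduce that $\dim\bigl((H_2)_{(4a-1,3b-1)}\bigr)=\sum_{x\in\Pc}e_x$ in the lci case; so deriving the lemma from that kind of bookkeeping without an independent local computation is circular in spirit, and you have not provided the independent computation.

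The paper's own proof avoids all of this by working directly with intersection theory: letting $\Gamma$ be the blow-up of $\PP^1\times\PP^1$ along $I$ and $\pi$ its projection, one has $\deg(\pi_\ast[\Gamma])=\deg(\phi)\deg(\Hc)$, and Fulton's formula (Intersection Theory, Prop.\ 4.4) gives $\deg(\phi)\deg(\Hc)=\int c_1(\LL)^2-\int_\Pc(1+c_1(\LL))^2\cap s(\Pc,\PP^1\times\PP^1)$ with $\LL=\OO_{\PP^1\times\PP^1}(a,b)$; since $\Pc$ is zero-dimensional, $\int c_1(\LL)^2=2ab$ and the Segre class contributes $\sum_{x\in\Pc}e_x$. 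If you want to salvage your approach, you would have to carry out, at each base point, the local length/multiplicity analysis of the almost complete intersection (in the style of \cite[Prop.\ 5]{BCJ06}) that you only sketch; as written, the argument has a gap exactly where the lemma's content lies.
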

\begin{proof}
Set $\LL:=\OO_{\PP^1\times \PP^1}(a,b)$, $\Gamma$ for the blow-up of $\PP^1\times \PP^1$ along $I$ and $\pi: \Gamma\to \PP^1\times \PP^1$ the natural projection. Since $\deg(\pi_\ast[\Gamma])=\deg(\phi)\deg(\Hc)$, by \cite[Prop.\ 4.4]{FuIT}, one has
 \[
 \deg(\phi)\deg(\Hc)=\int_{\PP^1\times \PP^1}c_1(\LL)^2-\int_\Pc (1+c_1(\LL))^2\cap s(\Pc,\PP^1\times\PP^1),
\]
where $c_1(\LL)$ is the first Chern class of $\LL$, and $s(\Pc,\PP^1\times\PP^1)$ the Segre class of $\Pc$ on $\PP^1\times \PP^1$. Since $\Pc$ has dimension zero, $C_1(\LL)=0$ on $\Pc$. It is well known that $\int_{\PP^1\times \PP^1}c_1(\LL)^2=2ab$. From \cite[Sec.\ 4.3]{FuIT}, $s(\Pc,\PP^1\times\PP^1)=\sum_{x\in \Pc}e_x$.
\end{proof}

\begin{rem}
 Combining Theorem \ref{CorToricImplicit} and Lemma \ref{remExtraFactor} we have that if $\Pc$ is lci, then $\Delta_\mu=H^{\deg(\phi)}$ for all $\mu\notin \Region(\gamma)$. From Lemma \ref{lemSizeMnu} and \ref{lemDegImage} we have that $\deg(\Delta_\nu)=\deg(H^{\deg(\phi)})=\deg(\phi)\deg(\Hc)=2ab-\sum_{x\in \Pc}e_x$. In particular $\dim\paren{(H_2)_{(4a-1,3b-1)}}=\sum_{x\in \Pc}e_x$.
\end{rem}

%%%%%%%%%%%%%%%%%%%%%%%%%%%%%%%%%%%%%%%%%%%%%%%%%%%%%%%%%%%%%%%%%%%%%%%%%%%%%%%%
%%%%%%%%    %%%%%%%%    %%%%%%%%    %%%%%%%%    %%%%%%%%    %%%%%%%%    %%%%%%%%
%%%%%%%%%%%%%%%%%%%%%%%%%%%%%%%%%%%%%%%%%%%%%%%%%%%%%%%%%%%%%%%%%%%%%%%%%%%%%%%%

\medskip

\subsection*{Acknowledgments\markboth{Acknowledgments}{Acknowledgments}}
 I would like to thank my thesis advisors, Marc Chardin and Alicia Dickenstein, for the great help I have received from them. Also, I want to thank Professor J.-P. Jouanolou and Professor D.A. Cox for the very detailed reading of my thesis, for their reports and in particular their comments on the chapter related to this paper. Finally, to Laurent Bus\'e for his comments and questions before, during and after the defense as a member of the jury, and for motivating my interest in this area.

\medskip

%%%%%%%%%%%%%%%%%%%%%%%%%%%%%%%%%%%%%%%%%%%%%%%%%%%%%%%%%%%%%%%%%%%%%%%%%%%%%%%
%%%%%%%%    %%%%%%%%    %%%%%%%%    %%%%%%%%    %%%%%%%%    %%%%%%%%    %%%%%%%%
%%%%%%%%%%%%%%%%%%%%%%%%%%%%%%%%%%%%%%%%%%%%%%%%%%%%%%%%%%%%%%%%%%%%%%%%%%%%%%%%

\def\cprime{$'$}

\end{document}